\def\ps@pprintTitle{%
 \let\@oddhead\@empty
 \let\@evenhead\@empty
 \def\@oddfoot{\centerline{\thepage}}%
 \let\@evenfoot\@oddfoot}
\newtheorem{theorem}{Theorem}
\newtheorem{definition}[theorem]{Definition}
\newtheorem{example}[theorem]{Example}
\newtheorem{lemma}[theorem]{Lemma}
\newtheorem{proposition}[theorem]{Proposition}
\newtheorem{remark}[theorem]{Remark}
\let\ds\displaystyle
\def\n{N}
\def\t{T}
\def\w{W}
\def\v{V}
\def\F{F}
\def\G{G}
\def\H{H}
\def\N{{\bar N}}
\def\T{{\bar T}}
\def\W{{\bar W}}
\def\V{{\bar V}}
\def\k{\kappa}
\def\p{\mathfrak{p}}
\def\f{\mathfrak{f}}
\def\g{\mathfrak{g}}
\def\a{\mathfrak{a}}
\begin{document}
\title{A Lie algebra structure on variation vector fields along curves in $2$-dimensional space forms}
\author[label1]{Jos\'e del Amor}
\author[label2]{\'Angel Gim\'enez}
\author[label1]{Pascual Lucas}
\address[label1]{Departamento de Matem\'aticas, Universidad de Murcia. \\ Campus del Espinardo, 30100 Murcia, Spain}
\address[label2]{Centro de Investigaci\'on Operativa, Universidad Miguel Hern\'andez de Elche.  \\ Avda. Universidad s/n, 03202 Elche (Alicante), Spain}

\date{}

\begin{abstract}
	A Lie algebra structure on variation vector fields along an immersed curve in a $2$-dimensional real space form is investigated. This Lie algebra particularized to plane curves is the cornerstone in order to define a Hamiltonian structure for plane curve motions. The Hamiltonian form and the integrability of the planar filament equation is finally discussed from this point of view.
\end{abstract}

\begin{keyword}
	Lie algebra \sep Integrable Hamiltonian system \sep Curve motions \sep Planar filament equation \sep mKdV equation 


	\MSC[2010] 14H70 \sep 17B80 \sep 35Q53 \sep 37K05 \sep 37K10
\end{keyword}

\maketitle

\section{Introduction}
This paper is motivated by the investigation of relationships between geometric motions of curves in a certain space and Hamiltonian systems of PDE's. Hamiltonian systems have been and remain an active topic of research, as witnessed by a number of relevant publications in the last decades. There is a great number of papers dealing with the study of  integrable systems associated with the evolution of plane (or $n$-dimensional) curves, specifically, evolution equations for the differential invariants of the curve, for example, its curvatures. The pioneering work \cite{hasimoto_soliton_1972} by Hasimoto about the motion of a vortex filament in a fluid and its connection with the cubic nonlinear Schrödinger (NLS) equation through the Hasimoto transformation  was the first to suggest such a link. The \emph{vortex filament flow} is the evolution equation for space curves modelling the motion of a one-dimensional vortex filament in an incompressible fluid, and is given by
\begin{equation}\label{LIE}
	\gamma_t=kB,
\end{equation} 
where $\gamma_t(s)=\gamma(s,t)$ denotes the evolving curve, parametrized by the arc-length $s$, and $k$ is its curvature. 

Motivated by this result, many other authors raised the issue of what other curves flows induce a  completely integrable PDE for their curvatures. In \cite{lamb_solitons_1977} Lamb gave a general procedure that helps to identify a certain space curve evolution with a given integrable equation. Later, in \cite{gurses_motion_1998,beffa_theory_1999,beffa_poisson_2010,beffa_integrable_2002,sanders_integrable_2003,anco_bi-hamiltonian_2006} and other references therein, the authors provide Hamiltonian structures to the intrinsic geometry of curves in $n$-dimensional Riemannian manifolds from different backgrounds. Chou and Qu (see \cite{chou_integrable_2002-1,chou_integrable_2003}) study motions of plane curves when the velocity vector fields are given in terms of differential invariants under other background geometries, considering Klein geometries instead of Euclidean ones. They have obtained, among other nonlinear equations, that KdV, Harry-Dym and Sawada-Kotera equations are induced by this kind of motions. Nevertheless, the common thread of these papers is the study of the relationship between differential invariants and Hamiltonian structures of PDE's.
 
However, there are very few papers addressing the integrability of the curve motion equation itself. In this regard, Langer and Perline (see for instance \cite{langer_poisson_1991}) interpreted the Hasimoto transformation as a Poisson map transforming LIE to NLS, which allows to lift Poisson structures of NLS equation to appropriate Poisson structures on the space of curves, finding in this way an infinite hierarchy of local commuting generalized symmetries and conserved quantities in involution for the LIE equation as the pull-back of those for the NLS equation. Yasui and Sasaki (see \cite{yasui_differential_1998}), based on \cite{langer_poisson_1991}, introduce a differential calculus on the space of asymptotically linear curves in order to clarify the integrability of the vortex filament equation by developing a recursion operator, symmetries and constants of motion. The \emph{planar filament (PF)} equation (\cite{langer_planar_1996,nakayama_integrability_1992}) is given by
\begin{equation}
\gamma_t=\frac12 k^2 {\t}+k' {\n}.	
\end{equation}
In \cite{langer_planar_1996}, the authors show that all the odd vector fields in the infinite sequence of commuting vector fields for LIE hierarchy are planarity-preserving, and this infinite sequence, starting with PF itself, is an integrable system. The PF flow induces the mKdV equation $k_t=k'''+\frac{3}{2}k^2k'$, a well-known completely integrable PDE. 

Drawing inspiration from the ideas of these papers, our aim here is to define a Lie algebra structure on the whole set of local variation vector fields along a inmersed curve in a $2$-dimensional space form. It should be remarked that the subspace consisting of local arc-length preserving variation vector fields is actually closed under bracket (Theorem \ref{Lie-Bracket}). By using this Lie algebra particularized to plane curves we define a Lie algebra structure on the phase space of PF equation, i.e., on the arc-length preserving variation vector fiels on planar curves (Theorem \ref{Lie-Bracket2}), that enables us to set a Hamiltonian structure as well as showing its integrability. To achieve this, we first construct a Lie algebras homomorphism between the Lie algebra of local arc-length variation vector fields and the Lie algebra of derivation vector fields. Then, we pull-back the standard Hamiltonian structure at the level of the curvature flow (the one for mKdV flow) to a Hamiltonian structure at the level of the curve flow (the one for PF equation) by means of such homomorphism. Our approach may be somewhat a formalization of the idea given in \cite{ivey_integrable_2001} in which a curve motion flow is defined to be integrable if it induces a completely integrable system of PDE for its curvatures. Our background will be the algebraic general formalism by Gelfand and Dickey (see \cite{dickey_soliton_2003} for more details) about Hamiltonian structures, even though we could have used another different algebraic framework for our purposes. 

Lastly, let us add that it was shown in \cite{goldstein_solitons_1992} that the PF equation can be interpreted physically as localized induction equation for boundary of vortex patch for ideal fluid flow in two dimensions. But, the PF equation has appeared in many other contexts, for instance, in \cite{maksimovic_multisolitons_2003} the authors present the results of experimental and theoretical study of two-dimensional vortex filament tangling, appearing in laser-matter interactions on nanosecond time scale. In two dimensions, the authors use planar filament equation as an evolution equation on plane curves. 


Below we summarize the general outline of the rest of the paper. In Sect. 2 we introduce some definitions and properties about derivations, and afterwards we give the suitable mathematical framework to study our Hamiltonian structures. Then we move in Sect. 3 to study the space of immersed arc-length parametrized curves in a $2$-dimensional real space form. In particular, we describe the space consisting of variation vector fields locally preserving arc-length parameter and derive important results for later use. Sect. 4 is the main section and is devoted to study the Lie algebra structure on local variation vector fields. In Sect. 5 we use such a Lie algebra to construct the Hamiltonian operator and give the Hamiltoninan structure for planar filament equation, discussing also its integrability.

\section{Preliminaries}
In this section we present the algebraic general framework underlying in both finite and infinite dimensional Hamiltonian systems (see \cite{dickey_soliton_2003}). Firstly, we introduce the differential calculus which will be needed later.
\subsection{Differential algebra $\mathcal{P}$}
Let $n$ be a positive integer and consider $u_0,u_1,\ldots,u_{n-2}$ differentiable functions in the real variable $x$. Set
\begin{equation*}
	u_i^{(m)}=\frac{d^mu_i}{dx^m},\quad \text{for } m\in \mathbb{N}, i\in \left\{0,\ldots,n-2\right\}.
\end{equation*}

Let $\mathcal{P}$ be the real algebra of polynomials in $u_0,\ldots,u_{n-2}$ and their derivatives of arbitrary order, namely,
\begin{equation*}
	\mathcal{P}=\mathbb{R}[u^{(m)}_i:m\in \mathbb{N},i\in \left\{0,\ldots,n-2\right\}].
\end{equation*}

In this algebra we define a derivation $\partial$ obeying
\begin{equation*}
	\begin{cases} 
		\partial(f g)=(\partial f)g+f(\partial g), \\
		\partial(u_i^{(m)})=u_i^{(m+1)},
	\end{cases}
\end{equation*}
becoming $\mathcal{P}$ a differential algebra. We also denote by $\mathcal{P}_0$ the elements of $\mathcal{P}$ such that its constant term vanishes. It is customary to take $\partial$ as the total derivative $D_x$ which can be viewed as
\begin{equation*}
	D_x=\sum_{i=0}^{n-2}\sum_{m\in \mathbb{N}}u_i^{(m+1)} \frac{\partial }{\partial u_i^{(m)}}.
\end{equation*}

Besides $\partial$, other derivations $\xi$ may also be considered. The action of $\xi$ is determined if we know how $\xi$  acts on the generators of the algebra. Indeed, set 
$$a_{i,m}=\xi u_i^{(m)},$$
then, for any $f\in \mathcal{P}$ we have
$$\xi f=\sum_{i=0}^{n-2}\sum_{m\in\mathbb{N}} a_{i,m} \frac{\partial f}{\partial u_i^{(m)}}.$$
Derivations commuting with the total derivative have important properties. Among others, if $[\xi,\partial]=0$, we have
\begin{equation*}
	a_{i,m+1}=\xi u_i^{(m+1)}=\xi\partial u_i^{(m)}=\partial\xi u_i^{(m)}=\partial a_{i,m}.
\end{equation*}
Thus 
\begin{equation*}
	\xi f=\sum_{i=0}^{n-2}\sum_{m\in\mathbb{N}} a_i^{(m)} \frac{\partial f}{\partial u_i^{(m)}},
\end{equation*}
where $a_i=a_{i,0}=\xi u_i$.
The space of all derivations on $\mathcal{P}$, denoted by $\mathop{\rm der}\nolimits(\mathcal{P})$, is a Lie algebra with respect to the usual commutator
\begin{equation*}
	[\partial_1,\partial_2]=\partial_1\partial_2-\partial_2\partial_1,\quad \partial_1,\partial_2\in \mathop{\rm der}\nolimits(\mathcal{P}).
\end{equation*}

Let $a=(a_0,\ldots,a_{n-2})$ be a set of $n-1$ elements of $\mathcal{P}$ and write
\begin{equation}\label{derivation}
	\partial_a=\sum_{i=0}^{n-2}\sum_{m\in\mathbb{N}} a_i^{(m)}\frac{\partial }{\partial u_i^{(m)}}.
\end{equation}
A direct computation shows that $\partial_a$ are derivations in $\mathcal{P}$ verifying
\begin{equation}\label{LieBracketDerivation}
	[\partial_a,\partial_b]=\partial_{\partial_ab-\partial_ba}.
\end{equation}
Thus, the set of derivations $\partial_a$ is a Lie subalgebra of $\mathop{\rm der}\nolimits(\mathcal{P})$. We will refer to $\partial_{a}$ as a derivation vector field, and the algebra of derivation vector fields will be denoted by  $\mathop{\rm der}\nolimits^*(\mathcal{P})$. Observe that, in particular, if we take $a=u'=(u'_0,\ldots,u'_{n-2})$ then $\partial=\partial_{u'}$, being $u'=D_x u$.


\begin{proposition}\label{prop-properties}
	The following properties are satisfied:
	\begin{enumerate}[(a)]
		\item The derivation vector fields $\partial_a$ defined by \eqref{derivation} commute with $\partial$, i.e., $[\partial_a,\partial]=0$.
		\item A derivation vector field $\partial_a$ acts on a functional $\F=\int f dx$, with $f\in \mathcal{P}$, according to the formula
		\begin{equation*}
			\partial_a \F=\int (\partial_a f)dx.
		\end{equation*}
		\item The following relation holds
		\begin{equation*}
			\partial_a \F=\int \sum_{i=0}^{n-2}a_i \frac{\delta f}{\delta u_i}dx,
		\end{equation*}
		where 
		$\ds\frac{\delta f}{\delta u_i}=\sum_{m=0}^\infty(-\partial)^m \left(\frac{\partial f}{\partial u_i^{(m)}}\right)$ is the standard variational derivative.
	\end{enumerate}
\end{proposition}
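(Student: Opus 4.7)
The plan is to treat the three parts separately, since each rests on an elementary but distinct idea and part (b) is the bridge between (a) and (c).

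For part (a), I would verify the commutator identity on the algebra generators and then extend by the derivation property. Both $\partial_a$ and $\partial$ are derivations of $\mathcal{P}$, hence so is their commutator $[\partial_a,\partial]$; to conclude it is zero, it suffices to check that it vanishes on every generator $u_j^{(k)}$. A direct computation yields $\partial_a\partial(u_j^{(k)})=\partial_a(u_j^{(k+1)})=a_j^{(k+1)}$ and $\partial\partial_a(u_j^{(k)})=\partial(a_j^{(k)})=a_j^{(k+1)}$, which coincide.

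For part (b), the idea is to interpret the formal functional $\F=\int f\,dx$ as an element of the quotient $\mathcal{P}/\partial\mathcal{P}$, since total derivatives are to be identified with zero under the integral sign. The formula $\partial_a\F=\int(\partial_a f)\,dx$ is then well-defined provided $\partial_a$ descends to this quotient, i.e., carries $\partial\mathcal{P}$ into itself. But this is exactly what part (a) supplies: for any $g\in\mathcal{P}$, one has $\partial_a(\partial g)=\partial(\partial_a g)\in \partial\mathcal{P}$.

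For part (c), I would apply iterated integration by parts to the expansion
\begin{equation*}
\partial_a f=\sum_{i=0}^{n-2}\sum_{m\in\mathbb{N}} a_i^{(m)}\,\frac{\partial f}{\partial u_i^{(m)}}.
\end{equation*}
Each summand with $m\geq 1$ admits $m$ successive integrations by parts (the boundary terms being absorbed into $\partial\mathcal{P}$, hence vanishing as functionals), transferring the $m$ derivatives from $a_i^{(m)}$ onto $\partial f/\partial u_i^{(m)}$ at the cost of a factor $(-1)^m$. Recollecting all contributions for a fixed index $i$ produces precisely the variational derivative $\delta f/\delta u_i=\sum_{m=0}^{\infty}(-\partial)^m(\partial f/\partial u_i^{(m)})$, which is the asserted identity.

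The main obstacle is essentially conceptual rather than computational: one must keep clear the distinction between $\mathcal{P}$, in which $\partial$ is a genuine nonzero derivation, and the functional space $\mathcal{P}/\partial\mathcal{P}$, in which its image becomes trivial, so that the manipulations in (b) and (c) are statements about well-defined actions on equivalence classes. Once this formal setup is granted, each claim reduces either to the derivation property of $\partial_a$ and $\partial$ or to a routine integration by parts.
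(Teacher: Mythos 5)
Your proof is correct. The paper states Proposition~\ref{prop-properties} without proof, as standard material from the Gelfand--Dickey formalism, and your argument---checking $[\partial_a,\partial]=0$ on the generators $u_j^{(k)}$ and extending by the Leibniz rule, using (a) to see that $\partial_a$ preserves $\mathop{\rm Im}\nolimits(\partial)$ so that its action on $\bar{\mathcal{P}}=\mathcal{P}/\mathop{\rm Im}\nolimits(\partial)$ is well defined, and then iterating the integration-by-parts identity \eqref{funtional-property} to collect the terms into the variational derivative $\frac{\delta f}{\delta u_i}$---is exactly the standard route the authors rely on.
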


From an algebraic point of view, the set of functionals is given by $\bar{\mathcal{P}}=\mathcal{P}/\mathop{\rm Im}\nolimits(\partial)$, and the canonical projection $\pi:\mathcal{P}\rightarrow \bar{\mathcal{P}}$ is traditionally denoted by $\int dx$. For any $f,g\in \mathcal{P}$ we have the skew-symmetry property 
\begin{equation}\label{funtional-property}
		\int f\partial(g) dx=-\int \partial(f)g dx.
\end{equation}


\subsection{Algebraic treatment of the Hamiltonian structure}
In order to define a Hamiltonian structure we need the following elements:
\begin{enumerate}[(1)]
	\item A Lie algebra $\mathcal{V}$ endowed with a Lie bracket $[\cdot,\cdot]$. The elements of $\mathcal{V}$ are called \emph{vector fields}.
	\item A linear space $\Omega^0$ with a left $\mathcal{V}$-module structure such that the elements of $\mathcal{V}$ act on $\Omega^0$ as left linear operators
	\begin{align*}
		\mathcal{V}\times\Omega^0  &\longrightarrow  \Omega^0 \\ 
		(\xi,\F)  & \rightsquigarrow \xi(\F)\equiv\xi \F
	\end{align*}
	and the following requirement is satisfied
	\begin{equation}\label{compatibilitycondition}
		\forall \xi_1,\xi_2\in \mathcal{V},\quad \forall \F\in\Omega^0 \Rightarrow (\xi_1\xi_2-\xi_2\xi_1)\F=[\xi_1,\xi_2]\F.
	\end{equation}
	\item A subspace $\Omega^1$ of the space $\bar{\Omega}_1=\left\{\alpha: \mathcal{V} \rightarrow \Omega^0  \right\}$ of all linear $\Omega^0$-valued functionals on $\mathcal{V}$. The space $\Omega^1$ has to satisfy the following conditions:
	\begin{enumerate}[(c1)]
		\item For all $\alpha\in\Omega^1$ there is an element $\xi\in \mathcal{V}$ such that $\alpha(\xi)\neq 0$.
		\item For all $\xi\in \mathcal{V}$ there is an element $\alpha\in\Omega^1$ such that $\alpha(\xi)\neq 0$.
		\item If $\F\in\Omega^0$ then $dF\in\Omega^1$, where $dF(\xi)\equiv\left\langle dF,\xi \right\rangle$ is defined by $dF(\xi)=\xi \F$.
	\end{enumerate}
	Hence a bilinear form which maps the elements $\alpha\in\Omega^1$ and $\xi\in \mathcal{V}$ to $\left\langle \alpha,\xi \right\rangle\equiv \alpha(\xi)$ is defined.
	\item A Hamiltonian (or Poisson) operator, i.e., a skew symmetric mapping  $\pi:\Omega^1\rightarrow \mathcal{V}$ verifying:
	\begin{enumerate}[(h1)]
		\item $\pi\Omega^1\subset \mathcal{V}$ is a Lie subalgebra.
		\item The $2$-form $\omega(\pi\alpha,\pi\beta)=\left\langle \pi\alpha,\beta \right\rangle$ is closed with respect to the differential defined by the Lie formula
		\begin{equation*}
			d\omega(\pi\alpha,\pi\beta,\pi\gamma)=(\pi\alpha)\omega(\pi\beta,\pi\gamma)-\omega([\pi\alpha,\pi\beta],\pi\gamma)+c.p.
		\end{equation*}
	\end{enumerate}
\end{enumerate}
Given a Hamiltonian structure, for every element $\F\in\Omega^0$ we can associate the vector field $\xi_\F=\pi(d\F)$. A natural Poisson bracket of two elements $\F,\G\in\Omega^0$ is also obtained by
\begin{equation}\label{Poisson-bracket}
	\left\{\F,\G\right\}=\xi_\F\G=d\G(\xi_\F)=d\G(\pi d\F)=\left\langle \pi d\F,d\G \right\rangle.
\end{equation}
It can be proved that the above bracket is skew-symmetric, satisfies the Jacobi equation and verifies $\xi_{\left\{\F,\G\right\}}=[\xi_\F,\xi_\G]$.

Thus, given a Hamiltonian $\H\in \Omega^0$, one can construct Hamiltonian differential equations as follows: for any $\F\in \Omega^0$ we have $\frac{\partial \F}{\partial t}=\xi_\H \F=\left\{\H,\F\right\}$. A vector field $\xi\in \mathcal{V}$ is called bi-Hamiltonian with respect to Hamiltonian operators $\pi_0$ and $\pi_1$ if there exist $H_0,H_1\in \Omega^0$ verifying:
$$\xi=\pi_0(dH_1)=\pi_1(dH_0).$$

A linear differential operator $\mathcal{R}:\mathcal{V}\rightarrow \mathcal{V}$ is a \emph{recursion operator} for a vector field $\xi$  if it is invariant under $\xi$, i.e., $L_\xi \mathcal{R}=0$, where $L_\xi$ is the Lie derivative along $\xi$. $\mathcal{R}$ is said to be \emph{hereditary} if for an arbitrary vector field $\xi\in \mathcal{V}$ the following relation is verified
$$L_{\mathcal{R}\xi}\mathcal{R}=\mathcal{R}L_\xi \mathcal{R}.$$

\begin{example}\label{ejemplo-mKdV}
	We shall set out briefly the bi-Hamiltonian structure for mKdV equation to be used later on. We consider $\mathcal{P}$ the real algebra of polynomials in $k$ and $\mathcal{V}$ the Lie algebra of derivations $\partial_a$, with $a\in \mathcal{P}$. Let $\Omega^0$ be the $\mathcal{V}$-module of functionals, i.e.,
	$$\Omega^0=\left\{\F=\int f ds: f\in \mathcal{P}\right\}.$$
	The vector fields $\partial_a$ act on the functionals according to
	$$\partial_a\F=\int\partial_af ds=\int a \frac{\delta f}{\delta k} ds.$$
	The space of one-forms is given by $\Omega^1=\left\{\alpha_p:p\in \mathcal{P}\right\}$, where each covector field $\alpha_p:\mathcal{V}\rightarrow \Omega^0$ is defined by $\alpha_p(\partial_a)=\int ap ds$. We consider the Hamiltonian operators $\pi_0,\pi_1$ given by
	$$\pi_0(\alpha_p)=\partial_{D_s(p)},\qquad \pi_1(\alpha_p)=\partial_{\mathcal{D}(p)}, \quad\text{where}\quad  \mathcal{D}=D_s^3+k_s D_s^{-1} k D_s+k^2 D_s.$$
	It is well-known that
	$$\xi=\pi_0(dH_1)=\pi_1(dH_0),\quad\text{where}\quad H_0=\int \frac{1}{2}k^2 ds,\quad  H_1=\int \left(\frac{1}{2}(k')^2-\frac{1}{8}k^4\right)ds$$
	is a bi-Hamiltonian system. The Hamiltonian differential equation is
	$$k_t=\xi_k=k'''+\frac{3}{2}k^2k'.$$
	Hence, the operator $\mathcal{R}=\pi_1\pi_0^{-1}:\mathcal{V}\rightarrow \mathcal{V}$ given by $\mathcal{R}(\partial_p)=\partial_{\mathcal{D}D_s^{-1}(p)}$, where $\mathcal{D}D_s^{-1}=D_s^2+k^2+k_sD_s^{-1}k$,  is a hereditary recursion operator for $\xi=\partial_{k'''+\frac{3}{2}k^2k'}$. 
	The Poisson bracket associated to $\pi_1$ acting on a pair of functionals $F=\int f ds$ and $G=\int g ds$  is given by
	\begin{equation}\label{poissonbracketpi1}
		\begin{aligned}
			\left\{F,G\right\}_{\pi_1}&=dG(\pi_1 dF)=dG \left(\pi_1 \alpha_{\frac{\delta f}{\delta k}}\right)\\
			&=\int  \left[\left(\frac{\delta f}{\delta k}\right)''\left(\frac{\delta g}{\delta k}\right)'+D^{-1}_{s}\left(k\left(\frac{\delta f}{\delta k}\right)'\right)k \left(\frac{\delta g}{\delta k}\right)'\right] ds.
		\end{aligned}
	\end{equation} 
\end{example}


\section{Curves variations}
In this section we introduce the space of curves which we are going to work with. Firstly, we briefly describe some basic notions about the geometry of immersed curves in a $2$-dimensional real space form. Afterwards we will recall some properties and formulas about curve variations in which many results are stated without proof.

Let $M^2(G)$ be a $2$-dimensional real space form with sectional curvature $G$ and $\gamma:I \rightarrow M^2(G)$ be an immersed arc-length  parametrized curve in $M^2(G)$. We set  $\left\{\t,\n\right\}$ \emph{the Frenet frame} of the curve, where $\t(s)=\gamma'(s)$ is the unit tangent vector and $\n$ is the unit normal to the curve. 
The \emph{Frenet equations} are given by  
\begin{equation}
	\label{EcuacionesFrenet}
	\begin{split}
		\nabla_\t\t&=k\n, \\ \nabla_\t \n&=-k\t,
	\end{split}
\end{equation}
where $k(s)=\|\nabla_{\t(s)}\t(s)\|$ is the curvature of the curve. The fundamental theorem for these curves tells us that $k$ determines completely the curve up to isometries. Even more, given a function $k(s)$ we can always construct a   curve in $M^2(G)$, parametrized by the arc-length parameter $s$, whose curvature function is precisely $k(s)$. 



For a curve $\gamma=\gamma(t):I\subset \mathbb{R} \rightarrow M^2(G)$ (not necessarily arc-length parametrized) we will denote by $v(t)=\|\gamma'(t)\|$ the speed of $\gamma$. For the sake of simplicity the letter $\gamma$ will also denote a variation  $\gamma=\gamma(t,\varepsilon):I \times(-\zeta,\zeta)\rightarrow M^2(G)$ of $\gamma=\gamma(t,0)$. Associated with such a variation is the variation vector field $\v(t)=\v(t,0)$, where $\v=\v(t,\varepsilon)=\frac{\partial \gamma}{\partial \varepsilon}(t,\varepsilon)$. We will use $\t=\t(t,\varepsilon), v=v(t,\varepsilon)$, etc. with obvious meanings. We write $\gamma(s,\varepsilon), k(s,\varepsilon), v(s,\varepsilon),$ etc., for the corresponding arc-length reparametrizations. 

In order to set up some results in this section we require to compute the formula for the variation of the curvature. The following lemma is a particular case of the corresponding lemma in \cite{langer_singer_1984} (p. 3) for curves variations in a Riemannian manifold.

\begin{lemma}\label{LemaVariaciones} 
With the above notation, the following 
assertions hold: 
\begin{enumerate}[(a)]
	\item $\ds \v(v)=\left.\frac{\partial v}{\partial \varepsilon}\right|_{\varepsilon=0}=\left\langle \nabla_\t\v,\t \right\rangle v$; 
	\item $\ds \v(k)=\left.\frac{\partial k}{\partial \varepsilon}\right|_{\varepsilon=0}= \left\langle \nabla_\t^2\v,\n \right\rangle-2k\left\langle \nabla_\t\v,\t \right\rangle+G \left\langle \v,\n \right\rangle$;
	\item $[\v,\t]=-\left\langle \nabla_\t\v,\t \right\rangle \t$.
\end{enumerate}
\end{lemma}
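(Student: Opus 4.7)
The plan is to derive all three identities at $\varepsilon=0$ from two ingredients: the torsion-free symmetry $\nabla_{\partial_\varepsilon}(\gamma_*\partial_t)=\nabla_{\partial_t}(\gamma_*\partial_\varepsilon)=\nabla_{\partial_t}\v$, which follows from $[\partial_t,\partial_\varepsilon]=0$, together with the Frenet equations and the constant-curvature identity $R(X,Y)Z=G(\langle Y,Z\rangle X-\langle X,Z\rangle Y)$ available in $M^2(G)$.

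For (a), I would differentiate the identity $v^2=\langle\gamma_*\partial_t,\gamma_*\partial_t\rangle$ with respect to $\varepsilon$. The symmetry above pushes $\nabla_{\partial_\varepsilon}$ through $\partial_t$, giving $2v\,\v(v)=2\langle\nabla_{\partial_t}\v,v\t\rangle$; using $\nabla_{\partial_t}=v\nabla_\t$ and dividing by $v$ yields (a) at once. For (c), the torsion-free property reads $[\v,\t]=\nabla_\v\t-\nabla_\t\v$. Writing $\gamma_*\partial_t=v\t$ and applying $\nabla_{\partial_\varepsilon}$, the same symmetry produces $\v(v)\t+v\nabla_\v\t=v\nabla_\t\v$, and substituting (a) isolates $[\v,\t]=-\langle\nabla_\t\v,\t\rangle\t$, which is (c).

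For (b), I would start from $k=\langle\nabla_\t\t,\n\rangle$ and differentiate in $\varepsilon$. Since $|\n|=1$, one has $\nabla_\v\n\perp\n$, so in dimension two $\nabla_\v\n$ is proportional to $\t$, and therefore $\langle\nabla_\t\t,\nabla_\v\n\rangle=\langle k\n,\nabla_\v\n\rangle=0$. Hence $\v(k)=\langle\nabla_\v\nabla_\t\t,\n\rangle$, and I would then apply the curvature identity
\begin{equation*}
\nabla_\v\nabla_\t\t=\nabla_\t\nabla_\v\t+R(\v,\t)\t+\nabla_{[\v,\t]}\t.
\end{equation*}
For the first term on the right, (c) gives $\nabla_\v\t=\nabla_\t\v-\langle\nabla_\t\v,\t\rangle\t$, and expanding via Frenet yields $\nabla_\t\nabla_\v\t=\nabla_\t^2\v-\t(\langle\nabla_\t\v,\t\rangle)\t-k\langle\nabla_\t\v,\t\rangle\n$. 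The second term satisfies $R(\v,\t)\t=G(\v-\langle\v,\t\rangle\t)=G\langle\v,\n\rangle\n$ by the constant-curvature formula, and (c) combined with Frenet gives $\nabla_{[\v,\t]}\t=-k\langle\nabla_\t\v,\t\rangle\n$ for the third. Taking inner product with $\n$ collects precisely $\langle\nabla_\t^2\v,\n\rangle-2k\langle\nabla_\t\v,\t\rangle+G\langle\v,\n\rangle$, as claimed in (b).

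The main obstacle is the bookkeeping in (b): the coefficient $-2k\langle\nabla_\t\v,\t\rangle$ arises as the sum of two separate contributions coming from $\nabla_\t\nabla_\v\t$ and $\nabla_{[\v,\t]}\t$, and one has to carefully separate the tangential and normal components so that every term involving derivatives of $\langle\nabla_\t\v,\t\rangle$ lies in the $\t$-direction and hence disappears after pairing with $\n$.
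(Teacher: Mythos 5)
Your proof is correct: parts (a) and (c) follow exactly as you say from the symmetry $\nabla_{\partial_\varepsilon}\gamma_*\partial_t=\nabla_{\partial_t}\v$, and in (b) the curvature commutation together with $R(\v,\t)\t=G(\v-\langle\v,\t\rangle\t)$ and the two $-k\langle\nabla_\t\v,\t\rangle$ contributions from $\nabla_\t\nabla_\v\t$ and $\nabla_{[\v,\t]}\t$ assemble precisely the stated formula. The paper itself gives no proof, merely citing the corresponding lemma of Langer--Singer, and your argument is essentially that standard computation, so there is nothing to reconcile.
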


\begin{proposition} \label{Vkvarphi}
	Let $\gamma$ be an immersed curve in $M^2(G)$, $k$ its curvature and $\mathfrak{X}(\gamma)$ the set of differentiable vector fields along $\gamma$. If $\v\in \mathfrak{X}(\gamma)$, then
	\begin{equation}\label{Vk}  
		\v(k)=\varphi_\v'-k\rho_\v+Gg_\v,
	\end{equation}
	where $\rho_\v=\left\langle \nabla_\t\v,\t  \right\rangle$ and  $\varphi_\v=\left\langle\nabla_\t\v,\n \right\rangle$ stand for the tangential and normal components of $\nabla_\t\v$, respectively, and $g_\v=\left\langle \v,\n \right\rangle$ is the normal component of $\v$.
\end{proposition}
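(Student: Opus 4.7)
The plan is to start from part (b) of Lemma \ref{LemaVariaciones}, which already gives the formula
\[
\v(k)=\langle \nabla_\t^2\v,\n\rangle-2k\langle \nabla_\t\v,\t\rangle+G\langle \v,\n\rangle
      =\langle \nabla_\t^2\v,\n\rangle-2k\rho_\v+Gg_\v,
\]
so the whole task reduces to rewriting $\langle \nabla_\t^2\v,\n\rangle$ in terms of $\varphi_\v$, $\rho_\v$ and the curvature.

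The key step is to differentiate $\varphi_\v=\langle \nabla_\t\v,\n\rangle$ along $\t$, using that the Levi--Civita connection is metric and that $\t$ acts on a function along $\gamma$ as $\varphi_\v'=\t(\varphi_\v)$. This yields
\[
\varphi_\v'=\langle \nabla_\t^2\v,\n\rangle+\langle \nabla_\t\v,\nabla_\t\n\rangle.
\]
Now invoke the second Frenet equation $\nabla_\t\n=-k\t$ from \eqref{EcuacionesFrenet}, which gives $\langle \nabla_\t\v,\nabla_\t\n\rangle=-k\rho_\v$. Therefore $\langle \nabla_\t^2\v,\n\rangle=\varphi_\v'+k\rho_\v$.

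Substituting this into the expression coming from Lemma \ref{LemaVariaciones}(b) produces
\[
\v(k)=\varphi_\v'+k\rho_\v-2k\rho_\v+Gg_\v=\varphi_\v'-k\rho_\v+Gg_\v,
\]
which is exactly \eqref{Vk}. I do not foresee any real obstacle: everything is a direct consequence of the Frenet equations combined with the existing lemma, and the only minor care needed is to remember that $\varphi_\v'$ stands for the derivative of the scalar function $\varphi_\v$ along the curve (so the product rule applied to $\langle \nabla_\t\v,\n\rangle$ yields two terms, and the Frenet formula for $\nabla_\t\n$ turns the second term into $-k\rho_\v$).
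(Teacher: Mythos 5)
Your argument is correct and is essentially identical to the paper's own proof: both differentiate $\varphi_\v=\left\langle \nabla_\t\v,\n\right\rangle$, use the Frenet equation $\nabla_\t\n=-k\t$ to get $\left\langle \nabla_\t^2\v,\n\right\rangle=\varphi_\v'+k\rho_\v$, and substitute into Lemma~\ref{LemaVariaciones}(b). Nothing is missing.
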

\begin{proof}
 We compute the first derivative of $\varphi_\v$,
 $$\varphi_\v'=\left\langle\nabla^2_\t\v,\n\right\rangle+\left\langle \nabla_\t\v,-k\t \right\rangle=\left\langle\nabla^2_\t\v,\n\right\rangle-k\rho_\v.$$
 Using Lemma~\ref{LemaVariaciones}(b) the formula \eqref{Vk} holds trivially.        
\end{proof}

\begin{definition}
	  We say that $\v\in\mathfrak{X}(\gamma)$ locally preserves the arc-length parameter along $\gamma$ if $\left.\frac{\partial v}{\partial \varepsilon}\right|_{\begin{subarray}{1} \varepsilon=0\end{subarray}}=0$. 
\end{definition}
As an immediate consequence of Lemma~\ref{LemaVariaciones} we obtain the following result. 

\begin{proposition}\label{prop-locallypreserving}
	A vector field $\v\in\mathfrak{X}(\gamma)$ locally preserves the arc-length parameter if and only if $\rho_\v=0$. 
\end{proposition}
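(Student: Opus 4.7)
The plan is to apply Lemma \ref{LemaVariaciones}(a) directly, observing that $\rho_\v$ is precisely the coefficient appearing in the formula for $\v(v)$.

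First, I would recall that $\gamma$ is an immersed curve, hence $v = \|\gamma'\| > 0$ everywhere. By definition, $\v$ locally preserves the arc-length parameter if and only if $\left.\frac{\partial v}{\partial \varepsilon}\right|_{\varepsilon=0} = 0$. Now Lemma \ref{LemaVariaciones}(a) gives the identity
\begin{equation*}
\left.\frac{\partial v}{\partial \varepsilon}\right|_{\varepsilon=0} = \langle \nabla_\t \v, \t \rangle\, v = \rho_\v\, v.
\end{equation*}

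Since $v$ is a strictly positive function along $\gamma$, the product $\rho_\v\, v$ vanishes identically if and only if $\rho_\v$ vanishes identically. Chaining the two equivalences yields the claim.

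There is essentially no obstacle here; the statement is a one-line consequence of the lemma together with the non-degeneracy condition $v>0$ built into the notion of an immersed curve. The only subtlety worth mentioning explicitly is this non-vanishing of the speed, which is what allows one to cancel $v$ from the relation $\v(v) = \rho_\v v$ and thereby pass from ``$\v(v)=0$'' to ``$\rho_\v = 0$''.
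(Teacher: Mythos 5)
Your argument is correct and coincides with the paper's, which presents the proposition as an immediate consequence of Lemma \ref{LemaVariaciones}(a): since $\v(v)=\left\langle \nabla_\t\v,\t \right\rangle v=\rho_\v\, v$ and the speed $v$ is strictly positive for an immersed curve, the vanishing of $\v(v)$ is equivalent to $\rho_\v=0$. Your explicit remark about cancelling the nonvanishing factor $v$ is exactly the (unstated) step the paper relies on.
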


Consider 
\begin{equation}\label{espacio-curvas}
		\Lambda_G=\{\gamma:I\rightarrow M^2(G) \text{ such that } \gamma\text{ is arc-length parametrized}\}
\end{equation}
the space of arc-length parametrized curves in $M^2(G)$. It is easy to see that $T_\gamma\Lambda_G$ can be identified with the set of all vector fields associated with variations of arc-length parametrized curves in $M^2(G)$ starting from $\gamma$. It is clear that a vector field in $T_\gamma\Lambda_G$ locally preserves the arc-length parameter. The converse is also true.

\begin{proposition}\label{prop-relacion-fg}
	A vector field along $\gamma\in \Lambda_G$ is tangent to $\Lambda_G$ if and only if it locally preserves the arc-length parameter, i.e.,
	\begin{equation}
		T_\gamma\Lambda_G=\left\{\v\in\mathfrak{X}(\gamma): \left\langle \nabla_\t \v,\t \right\rangle=0\right\}. 
	\end{equation}
	Therefore, $\v=f_\v\t+g_\v\n\in T_\gamma\Lambda_G$ if and only if  
	\begin{equation} \label{relacion-fg} 
		f_\v=D^{-1}_{s}(kg_\v),
	\end{equation}
	where $D^{-1}_{s}$ is a formal indefinite $s$-integral.
\end{proposition}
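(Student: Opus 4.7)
The plan is to prove the set equality $T_\gamma\Lambda_G = \{\v \in \mathfrak{X}(\gamma) : \rho_\v = 0\}$ first, and then to read off the explicit formula for $f_\v$ by decomposing $\nabla_\t\v$ in the Frenet frame. For the easy inclusion $T_\gamma\Lambda_G \subset \{\v : \rho_\v = 0\}$, if $\v \in T_\gamma\Lambda_G$ is the variation vector field of some $\gamma(s,\varepsilon)$ whose slices all lie in $\Lambda_G$, then the speed satisfies $v(s,\varepsilon) \equiv 1$, so $\partial v/\partial\varepsilon|_{\varepsilon=0} = 0$ identically, and Lemma \ref{LemaVariaciones}(a) yields $\rho_\v = 0$ at once.

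The converse inclusion is the substantive direction, and the hard part will be producing, from an arbitrary arc-length preserving $\v$, a genuine variation of $\gamma$ through $\Lambda_G$ realizing $\v$. I would first take any smooth variation $\tilde\gamma(s,\varepsilon)$ of $\gamma$ with $\partial\tilde\gamma/\partial\varepsilon|_{\varepsilon=0} = \v$ (for instance $\tilde\gamma(s,\varepsilon) = \exp_{\gamma(s)}(\varepsilon\v(s))$), set $v(s,\varepsilon) = \|\partial_s\tilde\gamma\|$, and then reparametrize each slice by arc length: define $\sigma(s,\varepsilon) = \int_0^s v(u,\varepsilon)\,du$, let $s = s(\sigma,\varepsilon)$ be its inverse in the first argument, and put $\hat\gamma(\sigma,\varepsilon) = \tilde\gamma(s(\sigma,\varepsilon),\varepsilon)$. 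Every slice of $\hat\gamma$ lies in $\Lambda_G$ by construction, and $\hat\gamma(\sigma,0) = \gamma(\sigma)$. The chain rule gives the variation vector field of $\hat\gamma$ at $\varepsilon = 0$ as $\v + \mu\t$, where $\mu = \partial s/\partial\varepsilon|_{\varepsilon=0}$. To eliminate this correction, I would combine Lemma \ref{LemaVariaciones}(a) with $\rho_\v = 0$ to obtain $\partial\sigma/\partial\varepsilon|_{\varepsilon=0}(s) = \int_0^s \rho_\v(u)\,du = 0$, and then differentiate the identity $s(\sigma(s,\varepsilon),\varepsilon) = s$ in $\varepsilon$ at $\varepsilon = 0$ to conclude $\mu = 0$. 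Hence $\hat\gamma$ realizes $\v$ and $\v \in T_\gamma\Lambda_G$; the arc-length preservation hypothesis is exactly what forces the tangential correction to vanish to first order.

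For the explicit formula, I would write $\v = f_\v\t + g_\v\n$ and apply the Frenet equations \eqref{EcuacionesFrenet}; a direct computation yields
\begin{equation*}
\nabla_\t\v = (f_\v' - k g_\v)\t + (g_\v' + k f_\v)\n,
\end{equation*}
so $\rho_\v = f_\v' - k g_\v$. The condition $\rho_\v = 0$ is therefore equivalent to $f_\v' = k g_\v$, i.e.\ $f_\v = D_s^{-1}(k g_\v)$ with $D_s^{-1}$ a formal $s$-antiderivative, completing the proof.
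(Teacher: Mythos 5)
Your proof is correct and follows the natural route that the paper itself leaves implicit: the paper states this proposition without proof, merely asserting that the identification of $T_\gamma\Lambda_G$ with the arc-length preserving fields "is easy to see" and that "the converse is also true." Your reparametrization argument (arc-length reparametrizing an arbitrary variation realizing $\v$ and using $\rho_\v=0$, via Lemma \ref{LemaVariaciones}(a), to show the first-order tangential correction $\mu$ vanishes) supplies exactly the detail the paper omits, and the Frenet computation $\rho_\v=f_\v'-kg_\v$, giving \eqref{relacion-fg} up to the integration constant the paper later normalizes to zero, is the same calculation the paper takes for granted.
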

Therefore, a tangent vector field $\v$ is completely determined by a differentiable function $g_\v$ and a constant. Throughout the paper such a constant will be assumed to be zero. From Propositions \ref{Vkvarphi}, \ref{prop-locallypreserving} and \ref{prop-relacion-fg} it is straightforward to check that if $\v\in T_\gamma\Lambda_G$ then
\begin{equation}
  \label{VKphi}
  \varphi_\v=g_\v'+kD^{-1}_{s}(kg_\v)\quad\text{and}\quad
  \v(k)=g_\v''+k'D^{-1}_{s}(kg_\v)+(k^2+G)g_\v=\mathcal{D}D_s^{-1}(g_\v)+G g_\v, 
\end{equation}
where $\mathcal{D}$ is given in Example \ref{ejemplo-mKdV}.

Finally, we shall deduce the first variation formula for an action $S:\Lambda_G \rightarrow \mathbb{R}$ given by $S(\gamma)=\int_I L(k_{\gamma},k'_{\gamma},\ldots) ds$, where $k_{\gamma}$ is the curvature function of the curve $\gamma$ and $L$ is a differentiable function. Let $\v=\left.\frac{d}{d\varepsilon}\right|_{\varepsilon=0}\gamma_\varepsilon$ be a variation vector field on $\gamma$, where $\gamma_\varepsilon\in\Lambda_G$ and $\gamma_0=\gamma$, then by using Proposition \ref{Vkvarphi} and a standard argument involving some integrations by parts, the first-order variation formula becomes:

\begin{equation}\label{firstvariation}\renewcommand*{\arraystretch}{5}
	\begin{aligned}
		(\delta S)_\gamma(\v)&=\left.\frac{d}{d\varepsilon}\right|_{\varepsilon=0}S(\gamma_\varepsilon)=\left.\frac{d}{d\varepsilon}\right|_{\varepsilon=0}\int_I L(k_{\gamma_\varepsilon},k'_{\gamma_\varepsilon},\ldots) ds \\
		&=\int_I \left.\frac{d}{d\varepsilon}\right|_{\varepsilon=0}L(k_{\gamma_\varepsilon},k'_{\gamma_\varepsilon},\ldots) ds=\int_I \v(k_\gamma)\frac{\delta L}{\delta k_{\gamma}} ds \\
		&=\int_I (\varphi_\v'+G g_\v)\frac{\delta L}{\delta k_{\gamma}} ds,
	\end{aligned}
\end{equation}
where $\frac{\delta L}{\delta k}$ stands for the usual variational derivative.



\section{A Lie algebra structure on variation vector fields}  
Let $\gamma:I \rightarrow M^2(G)$ be a curve with curvature function $k$ and let $\mathcal{P}=\mathbb{R}[k^{(m)}:m\in \mathbb{N}]$ be the real algebra of polynomials in $k$ and their derivatives of arbitrary order, where $k^{(m)}=k^{(m)}(s)$, and set
$$\mathfrak{X}_{\mathcal{P}}(\gamma)=\left\{\v=f\t+g\n\in \mathfrak{X}(\gamma):f,g\in \mathcal{P}\right\},$$
and
$$T_{\mathcal{P},\gamma}(\Lambda_G)=T_\gamma(\Lambda_G)\cap \mathfrak{X}_{\mathcal{P}}(\gamma)=\left\{\v=f\t+g\n\in \mathfrak{X}_{\mathcal{P}}(\gamma):f=D_s^{-1}(kg)\in \mathcal{P}_0\right\}.$$
Motivated by Lemma \ref{LemaVariaciones} and Proposition \ref{Vkvarphi}, given $\v\in \mathfrak{X}_{\mathcal{P}}(\gamma)$, we denote by $D_\v$ the unique derivation on $\mathfrak{X}_{\mathcal{P}}(\gamma)$ verifying:
\begin{equation}\label{formulas-derivation}
		\v(k)=\varphi_\v'-k\rho_\v+Gg_\v; \quad \v(f')=\v(f)'+\rho_\v f'; \quad D_\v \t = \varphi_\v \n; \quad  D_\v \n = -\varphi_\v \t.
\end{equation}
Thus, if $\v=f_\v\t+g_\v\n$ and $\w=f_\w\t+g_\w\n$ are variation vector fields in $\mathfrak{X}_{\mathcal{P}}(\gamma)$, we obtain the following formula:
\begin{equation}
	D_\v\w=\left(\v(f_\w)-g_\w\varphi_\v\right)\t+\left(\v(g_\w)+f_\w\varphi_\v \right)\n.
\end{equation}
Furthermore, it is straightforward to check that $D_\v$ is compatible with the metric, i.e.
$$\v \left\langle \w_1,\w_2 \right\rangle=\left\langle D_\v \w_1,\w_2 \right\rangle+\left\langle \w_2,D_\v\w_2 \right\rangle,$$
where $\w_1,\w_2\in\mathfrak{X}_{\mathcal{P}}(\gamma)$. We state the main result of this section.

\begin{theorem}\label{Lie-Bracket}
	Let $\gamma$ be a curve in $\Lambda_G$ and consider $[\cdot,\cdot]_{\gamma}:\mathfrak{X}_{\mathcal{P}}(\gamma)\times \mathfrak{X}_{\mathcal{P}}(\gamma)\rightarrow \mathfrak{X}_{\mathcal{P}}(\gamma)$ the map given by
	$$[\v,\w]_{\gamma}=D_{\v}\w-D_{\w}\v.$$  Then:
	\begin{enumerate}[(a)]
		\item $[\v,\w]_{\gamma}(f)=\v\w(f)-\w\v(f)$ for all $f\in \mathcal{P}$.
		\item $[,]_{\gamma}$ is skew-symmetric.
		\item $[,]_{\gamma}$ satisfies the Jacobi identity.
		\item $[,]_{\gamma}$ is closed for elements in $T_{\mathcal{P},\gamma}(\Lambda_G)$, i.e., if $\v,\w \in T_{\mathcal{P},\gamma}(\Lambda_G)$, then $[\v,\w]_{\gamma}\in T_{\mathcal{P},\gamma}(\Lambda_G)$.
	\end{enumerate}
\end{theorem}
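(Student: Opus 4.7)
The plan hinges on a key identity, $\rho_{[\v,\w]_\gamma} = \v(\rho_\w) - \w(\rho_\v)$, from which (a), (b), and (d) follow by short arguments, while (c) reduces to a first Bianchi identity for a residual curvature tensor that emerges in the calculation. To derive the identity, I expand $\rho_{[\v,\w]_\gamma} = \partial_s f_{[\v,\w]_\gamma} - k g_{[\v,\w]_\gamma}$ using the explicit tangential and normal components of $D_\v\w - D_\w\v$. The cross products $g_\w'\varphi_\v + kf_\w\varphi_\v$ consolidate to $\varphi_\v\varphi_\w$ and cancel against their $\v\leftrightarrow\w$ mirror; the remaining $\v$- and $\w$-pieces, after absorbing the first-order correction from $\v(k)g_\w - \w(k)g_\v$ (whose $G$-contributions cancel by symmetry), collapse to $\v(f_\w' - kg_\w) - \w(f_\v' - kg_\v) = \v(\rho_\w) - \w(\rho_\v)$.

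Part (b) is immediate from the definition: $[\v,\w]_\gamma = D_\v\w - D_\w\v = -[\w,\v]_\gamma$. For (a), the operator commutator $\v\w - \w\v$ is automatically a derivation on $\mathcal{P}$; a direct calculation using \eqref{formulas-derivation} twice yields
\[
(\v\w-\w\v)(f') = (\v\w-\w\v)(f)' + \bigl(\v(\rho_\w)-\w(\rho_\v)\bigr)f',
\]
so by the key identity, both $\v\w-\w\v$ and the derivation attached to $[\v,\w]_\gamma$ satisfy the same commutation rule with $\partial_s$. Both derivations on $\mathcal{P}$ are then pinned down by their values on the single generator $k$, and (a) reduces to verifying
\[
\v\w(k) - \w\v(k) = \varphi_{[\v,\w]_\gamma}' - k\rho_{[\v,\w]_\gamma} + G\,g_{[\v,\w]_\gamma},
\]
a lengthy but direct expansion via \eqref{formulas-derivation}.

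For (c), introduce an auxiliary vector field $U \in \mathfrak{X}_\mathcal{P}(\gamma)$ and expand $J := [[\v,\w]_\gamma, U]_\gamma + [[\w,U]_\gamma, \v]_\gamma + [[U,\v]_\gamma, \w]_\gamma$ using the definition; after regrouping, $J = \sum_{\text{c.p.}}\bigl(D_{[X,Y]_\gamma} - [D_X,D_Y]\bigr)Z$ with $(X,Y,Z)$ cycling over $(\v,\w,U),(\w,U,\v),(U,\v,\w)$. A direct calculation on the Frenet frame $\{\t,\n\}$ shows that
\[
R(X,Y)Z := \bigl([D_X,D_Y] - D_{[X,Y]_\gamma}\bigr)Z = G\bigl(\langle Y,Z\rangle X - \langle X,Z\rangle Y\bigr),
\]
the Riemann curvature tensor of $M^2(G)$; the first Bianchi identity $\sum_{\text{c.p.}}R(X,Y)Z = 0$ follows by immediate expansion, giving $J = 0$. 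For (d), the key identity trivially yields $\rho_{[\v,\w]_\gamma} = 0$ whenever $\rho_\v = \rho_\w = 0$, so $[\v,\w]_\gamma$ locally preserves arc-length; to place $f_{[\v,\w]_\gamma}$ in $\mathcal{P}_0$ (as required by Proposition~\ref{prop-relacion-fg}), observe that the identity $f_\v' = k g_\v$ forces $g_\v(0) = 0$ (since a formal $s$-derivative cannot produce a $k$-term), whence $\varphi_\v(0) = 0$ and both $\v, \w$ map $\mathcal{P}_0$ into $\mathcal{P}_0$; the four summands in $f_{[\v,\w]_\gamma} = \v(f_\w) - g_\w\varphi_\v - \w(f_\v) + g_\v\varphi_\w$ then individually have vanishing constant term.

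The principal obstacle is the direct verification in (a): matching $\v\w(k) - \w\v(k)$ against $\varphi_{[\v,\w]_\gamma}' - k\rho_{[\v,\w]_\gamma} + G g_{[\v,\w]_\gamma}$ requires tracking six or more cross-term contributions through two applications of the derivation rules, with the decisive cancellations again involving $\varphi_\v\varphi_\w$- and $G$-type terms. The parallel computation of $R(X,Y)Z$ in (c) is of similar flavor, but once the residual is recognized as the standard Riemann tensor the Bianchi step finishes the argument automatically.
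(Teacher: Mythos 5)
Your proposal is correct, and for (a), (b) and (d) it follows essentially the paper's route: the paper likewise reduces (a) to the single check on the generator $k$, gets (b) from the antisymmetry of the component formulas \eqref{formulas0}, and gets (d) from $\rho_{[\v,\w]_\gamma}=\v(\rho_\w)-\w(\rho_\v)$, which is exactly your key identity (the second half of \eqref{formulas2}); your extra care with the $\mathcal{P}_0$-normalization of the tangential component in (d) is a point the paper leaves implicit, and your argument for it is sound. The genuine divergence is (c): the paper proves Jacobi by brute force, expanding the components $f_{ijk},g_{ijk}$ of the double brackets and cancelling the cyclic sums, whereas you identify $[D_X,D_Y]-D_{[X,Y]_\gamma}$ as the constant-curvature tensor $G(\langle Y,Z\rangle X-\langle X,Z\rangle Y)$ (tensoriality in $Z$ being precisely part (a)) and finish with the algebraic Bianchi identity. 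This is cleaner and explains conceptually why the bracket closes, but it does not avoid the paper's core computation: the frame evaluation of your curvature defect requires $\varphi_{[\v,\w]_\gamma}=\v(\varphi_\w)-\w(\varphi_\v)-G(g_\v f_\w-g_\w f_\v)$, i.e.\ the first half of \eqref{formulas2}, which you never state and which is also the entire content of your ``lengthy but direct'' check of (a) on $k$; the paper derives both halves of \eqref{formulas2} once and reuses them, so your route repackages only the Jacobi step. One caution: carry out your expansions with the commutation rule forced by Lemma \ref{LemaVariaciones}(c), namely $\v(f')=\v(f)'-\rho_\v f'$; with the sign as printed in \eqref{formulas-derivation} the key identity acquires a spurious residual $-2k(\rho_\v g_\w-\rho_\w g_\v)$ (the paper's own formulas \eqref{formulas1}--\eqref{formulas2} are computed with the corrected sign, and for the arc-length preserving fields of part (d) the discrepancy is invisible).
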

\begin{proof}
To streamline the notation in the proof, given two vector fields $\v_i=f_i\t+g_i\n$ and $\v_j=f_j\t+g_j\n$, set $\v_{ij}=[\v_i,\v_j]_\gamma=f_{ij}\t+g_{ij}\n$, where 
\begin{equation}\label{formulas0}
	\begin{aligned}
		f_{ij} & =\v_i(f_j)-\v_j(f_i)+g_i\varphi_j-g_j\varphi_i\\
		g_{ij} & =\v_i(g_j)-\v_j(g_i)+f_j\varphi_i-f_i\varphi_j
	\end{aligned}
\end{equation}
being $\varphi_i=\varphi_{\v_i}$ and $\varphi_j=\varphi_{\v_j}$.
Furthermore, if we set $\rho_i=\rho_{\v_i}$ and $\rho_j=\rho_{\v_j}$, then some straightforward computations led to
\begin{equation}\label{formulas1}
	\begin{aligned}
		\varphi_{D_{\v_i}\v_j}&=\v_i(\varphi_j)+\rho_i\varphi_j+\rho_j\varphi_i-Gg_if_j, \\
		\rho_{D_{\v_i}\v_j}&=\v_i(\rho_j)+\rho_i\rho_j-\varphi_i\varphi_j+Gg_ig_j,
	\end{aligned}
\end{equation}
from which we can infer that
\begin{equation}\label{formulas2}
	\varphi_{ij}=\v_i(\varphi_j)-\v_j(\varphi_i)-G(g_if_j-g_jf_i), \qquad \rho_{ij}=\v_i(\rho_j)-\v_j(\rho_i).
\end{equation}
where $\varphi_{ij}=\varphi_{\v_{ij}}$ and $\rho_{ij}=\rho_{\v_{ij}}$.

To prove (a) it is sufficient to show that $[\v_1,\v_2]_\gamma(k)=\v_1\v_2(k)-\v_2\v_1(k)$. By combining formulas \eqref{formulas2} and \eqref{formulas-derivation} we obtain
\begin{equation*}
	[\v_1,\v_2]_\gamma(k)=\varphi'_{12}-k\rho_{12}+G g_{12}=\v_1(\varphi'_2-k\rho_2+G g_2)-\v_2(\varphi'_1-k\rho_1+G g_1)=\v_1\v_2(k)-\v_2\v_1(k).
\end{equation*}
The item (b) is deduced directly from the skew-symmetry of the formulas given in \eqref{formulas0}. In order to obtain (c) we use  (a) together with relations $\v_i(\varphi_j)=\v_i(k)f_j+k\v_i(f_j)+\v_i(g_j')$, hence, if we set  $\v_{ijk}=[\v_i,[\v_j,\v_k]_\gamma]_\gamma=f_{ijk}\t+g_{ijk}\n$, we obtain 
\begin{equation*}
	\begin{aligned}
		f_{ijk}&=\v_i\v_j(f_k)-\v_i\v_k(f_j)+\v_i(g_j)\varphi_k+\v_i(\varphi_k)g_j-\v_i(g_k)\varphi_j-\v_i(\varphi_j)g_k-\v_j(g_k)\varphi_i+\v_k(g_j)\varphi_i \\
		&\quad+\v_j(\varphi_k)g_i-\v_k(\varphi_j)g_i-[\v_j,\v_k]_\gamma(f_i)+f_j\varphi_k\varphi_i-f_k\varphi_i\varphi_j-G(g_jf_k-g_kf_j)g_i, \\
		g_{ijk}&=\v_i\v_j(g_k)-\v_i\v_k(g_j)-\v_i(f_j)\varphi_k-\v_i(\varphi_k)f_j+\v_i(g_k)\varphi_j+\v_i(\varphi_j)f_k+\v_j(f_k)\varphi_i-\v_k(f_j)\varphi_i \\
		&\quad+\v_j(\varphi_k)f_i-\v_k(\varphi_j)f_i-[\v_j,\v_k]_\gamma(g_i)+g_j\varphi_k\varphi_i-g_k\varphi_i\varphi_j+G(g_jf_k-g_kf_j)f_i.
	\end{aligned}
\end{equation*}		
Taking into consideration again \eqref{formulas2} and \eqref{formulas-derivation}, the above cyclic terms vanish, i.e.,
\[
\sum_\text{cyclic}f_{ijk}=\sum_\text{cyclic}g_{ijk}=0.
\] 
Finally, we obtain (d) from \eqref{formulas2}, since if $\v_1$ and $\v_2$ are variation vector fields in $T_\gamma(\Lambda_G)$  then $\rho_{12}=\v_1(\rho_2)-\v_2(\rho_1)=0$, and therefore $\v_{12}\in T_{\mathcal{P},\gamma}(\Lambda_G)$.
\end{proof}


From the above result it follows that $[,]_{\gamma}$ is a Lie bracket, ($\mathfrak{X}_{\mathcal{P}}(\gamma),[,]_{\gamma})$ is a Lie algebra and $(T_{\mathcal{P},\gamma}(\Lambda_G),[,]_{\gamma})$ is a Lie subalgebra of $\mathfrak{X}_{\mathcal{P}}(\gamma)$. Observe the condition of arc-length preserving requires anti-differentiation which does not, in general, preserve the local nature of $f$. Theorem \ref{Lie-Bracket}(d) shows that the subspace consisting of vector fields with local $f$ is if fact closed under bracket.

Each vector field $\v\in T_{\mathcal{P},\gamma}(\Lambda_G)$ can be considered as a derivation on $\mathcal{P}$ acting on the generator $k$ in the following way: $$\v(k)=\left.\frac{d}{d\varepsilon}\right|_{\varepsilon=0}k_{\gamma_{\varepsilon}}=\varphi'_{\v}+G g_\v.$$
Furthermore $\v$ commutes with $\t=D_s$ (since $\v$ locally preserves the arc-length parameter),  then $\v$ can be indentified with the derivation vector field $\partial_{\v(k)}$.  

\begin{proposition}\label{main-theorem}
	The map $\Phi_{\gamma}:T_{\mathcal{P},\gamma}(\Lambda_G) \rightarrow \mathop{\rm der}\nolimits^*(\mathcal{P})$ defined by $\Phi_\gamma(\v)=\partial_{\v(k)}$ is a homomorphism of Lie algebras.
\end{proposition}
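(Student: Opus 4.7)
The plan is to reduce the bracket-preserving condition to the identity
\[
[\v,\w]_\gamma(k) \;=\; \partial_{\v(k)}\!\bigl(\w(k)\bigr) - \partial_{\w(k)}\!\bigl(\v(k)\bigr),
\]
which, in view of the Lie bracket formula \eqref{LieBracketDerivation} for derivation vector fields, is exactly $\Phi_\gamma([\v,\w]_\gamma)=[\Phi_\gamma(\v),\Phi_\gamma(\w)]$. First I would observe that $\Phi_\gamma$ is well-defined: for $\v\in T_{\mathcal{P},\gamma}(\Lambda_G)$ we have $\rho_\v=0$, so by \eqref{formulas-derivation} (or equivalently \eqref{Vk}) one gets $\v(k)=\varphi_\v'+Gg_\v$, and since $g_\v\in\mathcal{P}$ and $\varphi_\v=g_\v'+kD_s^{-1}(kg_\v)\in\mathcal{P}$ by \eqref{VKphi}, we conclude $\v(k)\in\mathcal{P}$, hence $\partial_{\v(k)}\in\mathop{\rm der}\nolimits^*(\mathcal{P})$. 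Linearity of $\Phi_\gamma$ is immediate from the linearity of $a\mapsto\partial_a$.

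The key technical step, and the only real content of the argument, is to show the following comparison: for every $\v\in T_{\mathcal{P},\gamma}(\Lambda_G)$ and every $p\in\mathcal{P}$,
\[
\v(p) \;=\; \partial_{\v(k)}(p).
\]
Both sides are derivations on $\mathcal{P}$: the left because of \eqref{formulas-derivation}, the right by construction. Thus it suffices to check they agree on the generators $k^{(m)}$. Since $\rho_\v=0$, the second identity in \eqref{formulas-derivation} yields $[\v,\partial]=0$ on $\mathcal{P}$, hence $\v(k^{(m)})=\partial^m(\v(k))=(\v(k))^{(m)}$; on the other hand, by Proposition \ref{prop-properties}(a) and the defining formula \eqref{derivation}, $\partial_{\v(k)}(k^{(m)})=(\v(k))^{(m)}$ as well. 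So the two derivations coincide on generators, and therefore on all of $\mathcal{P}$.

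Applying this identity with $p=\w(k)$ and $p=\v(k)$, together with Theorem \ref{Lie-Bracket}(a), gives
\[
[\v,\w]_\gamma(k) \;=\; \v\bigl(\w(k)\bigr) - \w\bigl(\v(k)\bigr)
\;=\; \partial_{\v(k)}\bigl(\w(k)\bigr)-\partial_{\w(k)}\bigl(\v(k)\bigr).
\]
Feeding this into \eqref{LieBracketDerivation} produces
\[
\Phi_\gamma([\v,\w]_\gamma)=\partial_{[\v,\w]_\gamma(k)}=\partial_{\partial_{\v(k)}\w(k)-\partial_{\w(k)}\v(k)}=[\partial_{\v(k)},\partial_{\w(k)}]=[\Phi_\gamma(\v),\Phi_\gamma(\w)],
\]
completing the proof. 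The main (mild) obstacle is really just the derivation-comparison step above; once the arc-length preserving condition is used to secure $[\v,\partial]=0$, so that $\v$ is determined by its value on $k$ in precisely the same way $\partial_{\v(k)}$ is, everything else is formal and follows from results already in the paper.
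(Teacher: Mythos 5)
Your proposal is correct and follows essentially the same route as the paper: reduce everything to Theorem \ref{Lie-Bracket}(a) together with the identification of $\v$ with the derivation vector field $\partial_{\v(k)}$, and then invoke \eqref{LieBracketDerivation}. Your derivation-comparison step (checking $\v$ and $\partial_{\v(k)}$ agree on the generators $k^{(m)}$ using $[\v,\partial]=0$) simply makes rigorous the identification that the paper asserts informally in the paragraph preceding the proposition, so it is a welcome elaboration rather than a different argument.
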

\begin{proof}
	 Because of the linearity of the map $\Phi_{\gamma}$, to prove that it is a homomorphism, we must show that $\Phi_{\gamma}$ keeps the Lie bracket, i.e., $\Phi_{\gamma}([\v_1,\v_2]_{\gamma})=[\Phi_{\gamma}(\v_1),\Phi_{\gamma}(\v_2)]$. According  to the definition of $\Phi_\gamma$ it is sufficient to show that  $$\partial_{[\v_1,\v_2]_\gamma(k)}=\left[\partial_{\v_{1}(k)},\partial_{\v_{2}(k)}\right]=\partial_{\partial_{\v_1(k)}\v_2(k)-\partial_{\v_2(k)}\v_1(k)}.$$ 
	 But the last equality is inferred from Theorem \ref{Lie-Bracket}(a), since
	 $$[\v_1,\v_2]_\gamma(k)=\v_1\v_2(k)-\v_2\v_1(k)=\partial_{\v_1(k)}\v_2(k)-\partial_{\v_2(k)}\v_1(k).$$
\end{proof}

\section{Hamiltonian structure for plane curve evolution equations}
Consider $\Lambda_0$ the space of arc-length parametrized plane curves given by \eqref{espacio-curvas} when $G=0$. A map $\f:\Lambda_0 \rightarrow \mathcal{C}^\infty(I,\mathbb{R})$ is referred to as a scalar field on $\Lambda_0$ and $\f(\gamma)$ will be also denoted by $\f_\gamma$.
Let $\mathcal{A}$ be the algebra of $\mathcal{P}$-valued scalar fields on $\Lambda_0$, i.e., if $\f\in \mathcal{A}$, then $\f_\gamma\in \mathcal{P}$ for all $\gamma\in \Lambda_0$. In this sense, we will also understand the curvature scalar field $\k:\Lambda_0 \rightarrow \mathcal{C}^\infty(I,\mathbb{R})$  with its obvious meaning.

In the same way, a map $\V:\Lambda_0 \rightarrow \cup_{\gamma\in \Lambda_0} T_\gamma\Lambda_0$ is referred to as a vector field on $\Lambda_0$ and $\V(\gamma)$ will be also denoted by $\V_\gamma$. We shall denote the set of tangent vector fields on $\Lambda_0$ as $\mathfrak{X}(\Lambda_0)$, and within we consider the subset $\mathfrak{X}_{\mathcal{A}}(\Lambda_0)$ of vector fields having components in $\mathcal{A}$, namely,
\begin{equation}\label{vector-fields}
	\mathfrak{X}_{\mathcal{A}}(\Lambda_0)=\left\{\V=\f\T+\g \N\in \mathfrak{X}(\Lambda_0): \f,\g\in \mathcal{A}, \f_\gamma=D^{-1}_{s}(\k_\gamma\g_\gamma)\right\},
\end{equation}
the last condition being a consequence of Proposition \ref{prop-relacion-fg}. 
Observe that if $\V\in\mathfrak{X}_{\mathcal{A}}(\Lambda_0)$, then $\V_\gamma$ is an arc-length locally preserving vector field along $\gamma$. We also denote by $\bar{\mathfrak{X}}_{\mathcal{A}}(\Lambda_0)$ the set of vector fields $\V$ such that $\V_\gamma\in \mathfrak{X}_{\mathcal{P}}(\gamma)$ (not necessarily arc-length locally preserving). Hence,
$$\bar{\mathfrak{X}}_{\mathcal{A}}(\Lambda_0)=\left\{\V=\f\T+\g \N: \f,\g\in \mathcal{A}\right\}$$

\begin{remark}
	In what follows, we shall operate with scalar fields and vector fields in the natural way, understanding that the result of the operation is again a scalar field or vector field. For instance, if $\V,\W$ are vector fields on $\Lambda_0$, then $\left\langle \V,\W \right\rangle$ is a scalar field, where $\left\langle \V,\W \right\rangle(\gamma)=\left\langle \V_\gamma,\W_\gamma \right\rangle$; or $\nabla_\T \V(\gamma)$ is again a vector field, where $\nabla_\T \V(\gamma)=\nabla_{\T_\gamma}\V_\gamma$ and so on.
\end{remark}

As a direct consequence of Theorem \ref{Lie-Bracket}, we shall define a Lie algebra structure on $\bar{\mathfrak{X}}_{\mathcal{A}}(\Lambda_0)$.

\begin{theorem}\label{Lie-Bracket2}
	The map  $[\cdot,\cdot]:\bar{\mathfrak{X}}_{\mathcal{A}}(\Lambda_0)\times \bar{\mathfrak{X}}_{\mathcal{A}}(\Lambda_0)\rightarrow \bar{\mathfrak{X}}_{\mathcal{A}}(\Lambda_0)$  given by
	$$[\V,\W](\gamma)=[\V(\gamma),\W(\gamma)]_\gamma$$  is a Lie bracket verifying:
	\begin{enumerate}[(a)]
		\item $[\V,\W](f)=\V\W(f)-\W\V(f)$ for all $f\in \mathcal{A}$.
		\item $[,]$ is closed for elements in $\mathfrak{X}_{\mathcal{A}}(\Lambda_0)$, i.e., if $\V,\W \in \mathfrak{X}_{\mathcal{A}}(\Lambda_0)$, then $[\V,\W]\in \mathfrak{X}_{\mathcal{A}}(\Lambda_0)$.
	\end{enumerate}
	Hence, $[,]$ is a Lie bracket, ($\bar{\mathfrak{X}}_{\mathcal{A}}(\Lambda_0),[,])$ is a Lie algebra and $(\mathfrak{X}_{\mathcal{A}}(\Lambda_0),[,])$ is a Lie subalgebra of $\bar{\mathfrak{X}}_{\mathcal{A}}(\Lambda_0)$.
\end{theorem}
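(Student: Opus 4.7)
Since the bracket is defined pointwise by $[\V,\W](\gamma) = [\V(\gamma),\W(\gamma)]_\gamma$, the strategy is to transfer each conclusion of Theorem \ref{Lie-Bracket} from the fiber $\mathfrak{X}_{\mathcal{P}}(\gamma)$ (with $G=0$) to the global setting on $\Lambda_0$. All the Lie-algebraic content is already in place curve-by-curve; what must actually be checked is that the bracketed object again lies in $\bar{\mathfrak{X}}_{\mathcal{A}}(\Lambda_0)$, and that the scalar/vector field operations commute with evaluation at a curve.

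Well-definedness is the only genuine task. Writing $\V = \f_\V \T + \g_\V \N$ and $\W = \f_\W \T + \g_\W \N$ with components in $\mathcal{A}$, formula \eqref{formulas0} (with $G=0$) expresses the components of $[\V_\gamma,\W_\gamma]_\gamma$ in terms of $\V_\gamma(\f_\W(\gamma))$, $\W_\gamma(\f_\V(\gamma))$, their $\g$-analogues, and the products $\g_\V(\gamma)\varphi_{\W_\gamma}$ and $\g_\W(\gamma)\varphi_{\V_\gamma}$. From Proposition \ref{Vkvarphi} one has $\varphi_{\V_\gamma} = \g_\V(\gamma)' + \k_\gamma\,\f_\V(\gamma) \in \mathcal{P}$, while by \eqref{formulas-derivation} the derivation $\V_\gamma$ extends uniquely to $\mathcal{P}$ and maps $\mathcal{P}$ into $\mathcal{P}$. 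Hence each of the terms above lies in $\mathcal{P}$, the components of $[\V,\W]$ are $\mathcal{P}$-valued scalar fields, and $[\V,\W] \in \bar{\mathfrak{X}}_{\mathcal{A}}(\Lambda_0)$.

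Bilinearity, skew-symmetry, and the Jacobi identity for $[\cdot,\cdot]$ then follow curve-by-curve from Theorem \ref{Lie-Bracket}(b),(c). For (a), I would interpret the action of a vector field on a scalar field by $(\V(f))(\gamma) := \V_\gamma(f(\gamma))$; applying Theorem \ref{Lie-Bracket}(a) inside each fiber gives
\[
[\V,\W](f)(\gamma) = \V_\gamma(\W_\gamma(f(\gamma))) - \W_\gamma(\V_\gamma(f(\gamma))) = (\V\W - \W\V)(f)(\gamma),
\]
so $[\V,\W](f) = \V\W(f) - \W\V(f)$ on all of $\Lambda_0$. For (b), if $\V,\W \in \mathfrak{X}_{\mathcal{A}}(\Lambda_0)$ then $\V_\gamma,\W_\gamma \in T_{\mathcal{P},\gamma}(\Lambda_0)$ at every $\gamma$, whence Theorem \ref{Lie-Bracket}(d) places $[\V_\gamma,\W_\gamma]_\gamma \in T_{\mathcal{P},\gamma}(\Lambda_0)$, and $[\V,\W] \in \mathfrak{X}_{\mathcal{A}}(\Lambda_0)$. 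The main obstacle, if any, is the polynomiality check in the well-definedness step; once that is secured, everything descends automatically from the pointwise theorem.
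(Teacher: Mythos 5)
Your proposal is correct and follows essentially the same route as the paper, which states Theorem \ref{Lie-Bracket2} without proof as a direct pointwise consequence of Theorem \ref{Lie-Bracket} (with $G=0$). Your explicit well-definedness check --- that $\varphi_{\V_\gamma}=\g_\V(\gamma)'+\k_\gamma\f_\V(\gamma)\in\mathcal{P}$ and that the derivation $\V_\gamma$ preserves $\mathcal{P}$, so the bracket's components stay in $\mathcal{A}$ --- is exactly the fiberwise argument the paper leaves implicit.
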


We define $\mathop{\rm der}\nolimits (\mathcal{A})$ the set of derivations on $\mathcal{A}$ defined in the natural way and  $\mathop{\rm der}\nolimits^*(\mathcal{A})$ the Lie subalgebra of derivation vector fields. In this setting, the elements of $\mathop{\rm der}\nolimits^*(\mathcal{A})$ are given by $\partial_\a$, with $\a\in \mathcal{A}$, such that they are defined by $\partial_\a \f(\gamma)=\partial_{\a_\gamma}\f_\gamma$, for all $\f\in \mathcal{A}$. Each vector field $\V$ on $\Lambda_0$ can be considered as a derivation on $\mathcal{A}$ acting on the generator $\k$ in the following way: $$\V(\k)(\gamma)=\V_{\gamma}(\k_\gamma)=\varphi'_{\V_\gamma},$$
where $\varphi_{\V_\gamma}=\left\langle \nabla_{\T_\gamma}\V_{\gamma}, \N_\gamma \right\rangle$. 
Because of Proposition \ref{main-theorem}, the map $\Phi:\mathfrak{X}_{\mathcal{A}}(\Lambda_0) \rightarrow \mathop{\rm der}\nolimits^*(\mathcal{A})$ defined by $\Phi(\V)(\gamma)=\Phi_\gamma(\V_\gamma)$ is a homomorphism of Lie algebras. Futhermore, $\Phi$ is injective, since if $\partial_{\V(\k)}=0$ then $\V(\k)=0$, and therefore $\V_\gamma(\k_\gamma)=0$ for all $\gamma\in\Lambda_0$, which necessarily implies that $\V=0$. Therefore, $\mathop{\rm Im}\nolimits(\Phi)$ is a Lie subalgebra of the algebra of derivation vector fields  $\mathop{\rm der}\nolimits^*(\mathcal{A})$, so we conclude that the algebra of vector fields $\mathfrak{X}_{\mathcal{A}}(\Lambda_0)$ on $\Lambda_0$ can be regarded as a Lie subalgebra of derivation vector fields.
Hereinafter, if $\V$ is a variation vector field, we will understand that $$\Phi^{-1}_\gamma(\V_\gamma)=(\Phi^{-1}(\V))_\gamma.$$

We are in a position to set out the ingredients to construct a Hamiltonian structure on $\mathfrak{X}_{\mathcal{A}}(\Lambda_0)$.
Consider $\Omega^0$ the set of actions on $\Lambda_0$ whose Lagrangian density is an element of $\mathcal{A}$. Observe that an action $\int_IL ds$  can be thought of as a functional $\int L ds$ when we consider suitable conditions on the function $\k(s)$ in order to get that the boundary term vanishes when integrating by parts, and so the property \eqref{funtional-property} holds. This is why we shall consider the set of algebraic functionals
\begin{equation*}
	\Omega^0=\left\{S=\int  L ds,\quad L=L(\k,\k',\ldots)\in \mathcal{A}\right\}.
\end{equation*}
Here we understand that $S$ acts on $\Lambda_0$ as $S(\gamma)=\int  L(\k_{\gamma},\k'_{\gamma},\ldots)ds$. 
The vector fields of $\mathfrak{X}_{\mathcal{A}}(\Lambda_0)$ act on $\Omega^0$ in the form
\begin{equation}\label{relacion-modulo}
	(\V S)(\gamma)=\int  \V_\gamma(\k_\gamma) \frac{\delta L}{\delta \k_\gamma} ds=\partial_{\V_\gamma(k_\gamma)}S(\gamma)=\Phi_\gamma(\V_\gamma)(S(\gamma)).
\end{equation}
From Theorem \ref{Lie-Bracket2}(a), the compatibility condition given by \eqref{compatibilitycondition} is trivial.

Consider $\bar{\Omega}^1$ the linear space given by all the linear $\Omega^0$-valued functionals on the vector fields space $\mathfrak{X}_{\mathcal{A}}(\Lambda_0)$. For each element $\p$ of $\mathcal{A}$ we associate a covector field $\alpha_\p:\mathfrak{X}_{\mathcal{A}}(\Lambda_0)\rightarrow \Omega^0$ defined by $\alpha_\p(\V):=\int  \V(\k) \p ds$. Therefore, we deduce 
\begin{equation}\label{relacion-unoformas}
	\alpha_\p(\V)(\gamma)=\int  \V_\gamma(\k_\gamma)\p_\gamma ds=\alpha_{\p_\gamma}(\partial_{\V_\gamma(k_\gamma)})=(\alpha_{\p_\gamma}\circ \Phi_\gamma)(\V_\gamma),
\end{equation}
where $\alpha_{\p_\gamma}$ is defined in the Example \ref{ejemplo-mKdV}.
We set the space $\Omega^1$ as
\begin{equation*}
	\Omega^1=\left\{\alpha_\p\in\bar{\Omega}^1:\p\in \mathcal{A}\;\text{and}\;\exists \p_1\in \mathcal{A} \text{ such that } \p'_1=\k \p'\right\}. 
\end{equation*}
We shall show that if $S\in\Omega^0$, then $dS\in\Omega^1$. Indeed,
\begin{equation*}
	\begin{split}
		dS(\V)=\V S=\int  \V(\k)\frac{\delta L}{\delta \k}ds= \alpha_{\frac{\delta L}{\delta \k}}(\V).
	\end{split}
\end{equation*}
Since $\k\left(\frac{\delta L}{\delta \k}\right)'$ is a total derivative we obtain $dS=\alpha_{\frac{\delta L}{\delta \k}}\in\Omega^1$. Observe that we can identify $dS$ with $\delta S$, this being a consequence of \eqref{firstvariation}.

\begin{theorem}\label{teor-hamiltonian-operator}
	The operator  $\bar\pi:\Omega^1 \rightarrow \mathfrak{X}_{\mathcal{A}}(\Lambda_0)$ defined by $$\bar\pi(\alpha_\p)(\gamma)=\Phi^{-1}_{\gamma}\pi_1(\alpha_{\p_\gamma}),$$ 
	is a Hamiltonian operator, where $\pi_1$ is given in the Example \ref{ejemplo-mKdV}.  
\end{theorem}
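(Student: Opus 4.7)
The plan is to verify the four defining properties of a Hamiltonian operator listed in Section~2.2 by transferring them, via the injective Lie algebra homomorphism $\Phi:\mathfrak{X}_{\mathcal{A}}(\Lambda_0)\to\mathop{\rm der}\nolimits^*(\mathcal{A})$ introduced after Proposition~\ref{main-theorem}, from the corresponding properties already established for the mKdV Hamiltonian operator $\pi_1$ of Example~\ref{ejemplo-mKdV}. By construction $\bar\pi=\Phi_\gamma^{-1}\circ\pi_1$ at each $\gamma$, so the whole argument amounts to a pullback of the mKdV Hamiltonian structure along $\Phi$.

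For well-definedness, I would use \eqref{VKphi} with $G=0$, which reads $\V(\k)=\mathcal{D}D_s^{-1}(\g_\V)$ for $\V\in\mathfrak{X}_{\mathcal{A}}(\Lambda_0)$, together with $\pi_1(\alpha_{\p_\gamma})=\partial_{\mathcal{D}(\p_\gamma)}$. Equating these two derivations of $\mathcal{P}$ at each $\gamma$ forces $\g_\V=\p'$ and $\f_\V=D_s^{-1}(\k\p')=\p_1$, where $\p_1$ is precisely the element whose existence is demanded by the definition of $\Omega^1$. Hence $\bar\pi(\alpha_\p)=\p_1\T+\p'\N$ lies in $\mathfrak{X}_{\mathcal{A}}(\Lambda_0)$, and injectivity of $\Phi_\gamma$ yields uniqueness.

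Skew-symmetry reduces pointwise to the skew-symmetry of $\pi_1$: by \eqref{relacion-modulo} and \eqref{relacion-unoformas}, $\langle\bar\pi\alpha_\p,\alpha_\q\rangle(\gamma)=\langle\pi_1\alpha_{\p_\gamma},\alpha_{\q_\gamma}\rangle$, which is skew in $(\p,\q)$. For property (h1), given $\V_i=\bar\pi(\alpha_{p_i})$, Theorem~\ref{Lie-Bracket2}(b) places $[\V_1,\V_2]$ in $\mathfrak{X}_{\mathcal{A}}(\Lambda_0)$, and the fact that $\pi_1\Omega^1$ is a Lie subalgebra of $\mathop{\rm der}\nolimits^*(\mathcal{A})$ supplies $\tilde p\in\mathcal{A}$ (built algebraically from $p_1,p_2$) such that $\Phi([\V_1,\V_2])_\gamma=[\pi_1\alpha_{p_{1,\gamma}},\pi_1\alpha_{p_{2,\gamma}}]=\pi_1\alpha_{\tilde p_\gamma}$. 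Matching the components of $\bar\pi\alpha_{\tilde p}$, as in the previous paragraph, against those of $[\V_1,\V_2]$ identifies $\tilde p_1$ with $\f_{[\V_1,\V_2]}\in\mathcal{A}$, which is exactly the $\Omega^1$-membership condition for $\alpha_{\tilde p}$; hence $[\V_1,\V_2]=\bar\pi\alpha_{\tilde p}\in\bar\pi\Omega^1$.

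For the closedness condition (h2), the same comparison gives $\omega(\V_i,\V_j)(\gamma)=\omega_{\pi_1}(\pi_1\alpha_{p_{i,\gamma}},\pi_1\alpha_{p_{j,\gamma}})$, so that \eqref{relacion-modulo} yields $(\V_k\omega(\V_i,\V_j))(\gamma)=\pi_1\alpha_{p_{k,\gamma}}\bigl(\omega_{\pi_1}(\pi_1\alpha_{p_{i,\gamma}},\pi_1\alpha_{p_{j,\gamma}})\bigr)$, while the computation in (h1) gives $\omega([\V_i,\V_j],\V_k)(\gamma)=\omega_{\pi_1}\bigl([\pi_1\alpha_{p_{i,\gamma}},\pi_1\alpha_{p_{j,\gamma}}],\pi_1\alpha_{p_{k,\gamma}}\bigr)$. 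Summing cyclically, $(d\omega(\V_1,\V_2,\V_3))(\gamma)$ coincides with the Lie-algebra differential $d\omega_{\pi_1}$ evaluated at the three derivation vector fields, and it vanishes because $\pi_1$ is already known to be Hamiltonian. The delicate point that I expect to demand most care is the verification in (h1) that the $\tilde p$ supplied by the Poisson bracket of $\pi_1$ really lies in $\mathcal{A}$ and satisfies the $\Omega^1$-constraint $\tilde p_1'=\k\tilde p'$; once that algebraic compatibility is granted, the rest of the proof is essentially mechanical.
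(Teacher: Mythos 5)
Your proposal is correct and takes essentially the same approach as the paper: the paper's own (one-sentence) proof also deduces the Hamiltonian property of $\bar\pi$ by pulling back the mKdV structure $\pi_1$ through the Lie algebra homomorphism $\Phi$, citing precisely \eqref{relacion-modulo}, \eqref{relacion-unoformas} and the homomorphism property, and then records the explicit formula $\bar\pi(\alpha_\p)(\gamma)=D^{-1}_{s}(\k_\gamma\p'_\gamma)\T_\gamma+\p'_\gamma\N_\gamma$ that you obtain in your well-definedness step. Your verifications of skew-symmetry, (h1) and (h2), including the check that $\tilde p$ meets the $\Omega^1$-constraint via the tangential component of $[\V_1,\V_2]$, merely spell out details the paper leaves implicit.
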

The proof of Theorem \ref{teor-hamiltonian-operator} is easily deduced by using  \eqref{relacion-modulo}, 
\eqref{relacion-unoformas}  and the fact that $\Phi$ is an homomorphism of algebras.
Explicitly, we find that
\begin{equation}
	\begin{aligned}
		\bar\pi(\alpha_\p)(\gamma)&=\Phi^{-1}_{\gamma}\pi_1(\alpha_{\p_\gamma})=\Phi^{-1}_{\gamma}(\partial_{\mathcal{D}(\p_\gamma)})=\Phi^{-1}_{\gamma}(\partial_{\mathcal{D}D_s^{-1}(\p'_\gamma)}) \\
		&=D^{-1}_{s}(\k_\gamma\p'_\gamma)\T_\gamma+\p'_\gamma\N_\gamma, 
	\end{aligned}
\end{equation}
where $\mathcal{D}$ is the operator described in the Example \ref{ejemplo-mKdV}.

\begin{remark}
	Automatically, we have defined a natural Poisson bracket given by \eqref{Poisson-bracket}, which in this case takes the form:
	$$\left\{S_1,S_2\right\}_{\bar\pi}=dS_2(\bar\pi(dS_1))=\bar\pi(\alpha_{\frac{\delta L_1}{\delta k}})S_2.$$
	Evaluating on $\gamma$ and using \eqref{relacion-modulo} we obtain
	\begin{equation*}
		\begin{aligned}
			\left\{S_1,S_2\right\}_{\bar\pi}(\gamma )&=\Phi_\gamma(\bar\pi(\alpha_{\frac{\delta L_1}{\delta k}})(\gamma))S_2(\gamma)=\pi_1(\alpha_{\frac{\delta L_1}{\delta k_\gamma}})S_2(\gamma)=\left\{S_1(\gamma),S_2(\gamma)\right\}_{\pi_1} \\
			&=\int  \left[\left(\frac{\delta L_1}{\delta \k_\gamma}\right)''\left(\frac{\delta L_2}{\delta \k_\gamma}\right)'+D^{-1}_{s}\left(\k_\gamma\left(\frac{\delta L_1}{\delta \k_\gamma}\right)'\right)\k_\gamma \left(\frac{\delta L_2}{\delta \k_\gamma}\right)'\right] ds,
		\end{aligned}
	\end{equation*}
	where the last equality is followed from \eqref{poissonbracketpi1}.
\end{remark}

Finally, we proceed to calculate explicitly the Hamiltonian differential equation $\gamma_t=\V_S(\gamma)$  for a given Hamiltonian functional $S\in \Omega^0$. 
Let $S=\int  L(\k,\k',\ldots)ds$ in $\Omega^0$, then the Hamiltonian vector field associated to $S$ is given by 
\begin{equation*}
	\V_S=\bar\pi(dS)=\bar\pi(\alpha_{\frac{\delta L}{\delta \k}})=\f\T+\g\N,
\end{equation*}
where $\f_\gamma=D^{-1}_{s}\left(\k_\gamma \left(\frac{\delta L}{\delta \k_\gamma}\right)'\right)$ and $\g_\gamma=\left(\frac{\delta L}{\delta \k_\gamma}\right)'$. Consequently, Hamiltonian equation takes the form
\begin{equation*}
	\gamma_t=D^{-1}_{s}\left(\k_\gamma \left(\frac{\delta L}{\delta \k_\gamma}\right)'\right) \T_\gamma+\left(\frac{\delta L}{\delta \k_\gamma}\right)'\N_\gamma.
\end{equation*}

\begin{example}\label{example1}
We consider $S_0=\int  \frac12 \k^2 ds$, then $\frac{\delta L}{\delta \k}=\k$ and the Hamiltonian equation is
	\begin{equation*}
		\gamma_t=\V_{S_0}(\gamma)=\frac{1}{2}\k^2_\gamma\T_\gamma+\k'_{\gamma}\N_\gamma.
	\end{equation*}
\end{example}
	This equation is known by \emph{planar filament equation}, and was studied in \cite{langer_planar_1996} from a different approach.

\begin{theorem}
	The operator $\bar{\mathcal{R}}:\mathfrak{X}_{\mathcal{A}}(\Lambda_0) \rightarrow \mathfrak{X}_{\mathcal{A}}(\Lambda_0)$ defined by
	$$(\bar{\mathcal{R}}\V)(\gamma)=(\Phi^{-1}_{\gamma} \mathcal{R} \Phi_{\gamma})(\V_\gamma)$$ 
	is a hereditary recursion operator for $\V_{S_0}$.
\end{theorem}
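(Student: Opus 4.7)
The plan is to transport the recursion-operator and hereditary properties of $\mathcal{R}$ from the curvature side (Example~\ref{ejemplo-mKdV}) to the curve side by conjugation through the injective Lie-algebra homomorphism $\Phi$ built in Proposition~\ref{main-theorem} and promoted to $\mathfrak{X}_{\mathcal{A}}(\Lambda_0)$ in the discussion preceding Theorem~\ref{teor-hamiltonian-operator}. Once $\bar{\mathcal{R}}$ is shown to be well-defined as a map into $\mathfrak{X}_{\mathcal{A}}(\Lambda_0)$, both defining identities become purely algebraic consequences of $\Phi$ being a homomorphism.

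First I would identify $\V_{S_0}$ with the mKdV derivation vector field $\xi = \partial_{k''' + \frac{3}{2}k^2 k'}$ of Example~\ref{ejemplo-mKdV}. Since $\g_{\V_{S_0}} = \k'$, plugging into \eqref{VKphi} with $G=0$ gives
\[
\V_{S_0}(\k) = \k''' + \k' D_s^{-1}(\k\k') + \k^2 \k' = \k''' + \tfrac{3}{2}\k^2 \k',
\]
so $\Phi(\V_{S_0}) = \xi$. Next I would verify that $\bar{\mathcal{R}}$ really lands in $\mathfrak{X}_{\mathcal{A}}(\Lambda_0)$. Given $\V = \f\T + \g\N$, we have $\Phi_\gamma(\V_\gamma) = \partial_{\mathcal{D}D_s^{-1}(\g_\gamma)}$ by \eqref{VKphi}, hence $\mathcal{R}\Phi_\gamma(\V_\gamma) = \partial_{\mathcal{D}D_s^{-1}(\tilde{\g}_\gamma)}$ with $\tilde{\g}_\gamma := \mathcal{D}D_s^{-1}(\g_\gamma)\in\mathcal{P}$; Proposition~\ref{prop-relacion-fg} then recovers the unique arc-length-preserving field $\tilde\V_\gamma = \tilde{\f}_\gamma \T_\gamma + \tilde{\g}_\gamma \N_\gamma$ with $\tilde{\f}_\gamma = D_s^{-1}(\k_\gamma \tilde{\g}_\gamma)\in\mathcal{P}_0$, which we declare to be $(\bar{\mathcal{R}}\V)(\gamma)$.

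For the recursion-operator property $L_{\V_{S_0}}\bar{\mathcal{R}} = 0$, I would fix $\V \in \mathfrak{X}_{\mathcal{A}}(\Lambda_0)$ and apply $\Phi_\gamma$ to the identity $(L_{\V_{S_0}}\bar{\mathcal{R}})(\V)(\gamma) = [\V_{S_0}, \bar{\mathcal{R}}\V](\gamma) - \bar{\mathcal{R}}[\V_{S_0}, \V](\gamma)$. By the homomorphism property of $\Phi$ (Proposition~\ref{main-theorem} applied pointwise, together with Theorem~\ref{Lie-Bracket2}(a)) and the defining relation $\Phi_\gamma \circ \bar{\mathcal{R}} = \mathcal{R} \circ \Phi_\gamma$, the right-hand side transports to
\[
[\xi_\gamma, \mathcal{R}\Phi_\gamma(\V_\gamma)] - \mathcal{R}[\xi_\gamma, \Phi_\gamma(\V_\gamma)] = (L_\xi \mathcal{R})(\Phi_\gamma(\V_\gamma)) = 0,
\]
since $\mathcal{R}$ is a recursion operator for $\xi$. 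Injectivity of $\Phi_\gamma$ then yields $(L_{\V_{S_0}}\bar{\mathcal{R}})(\V) = 0$ at every $\gamma$.

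For the hereditary property $L_{\bar{\mathcal{R}}\V}\bar{\mathcal{R}} = \bar{\mathcal{R}} L_\V \bar{\mathcal{R}}$, I would evaluate both sides on an arbitrary test vector field $\W$ and push forward through $\Phi_\gamma$, reducing them, via the same homomorphism identity, to $(L_{\mathcal{R}\Phi_\gamma(\V_\gamma)}\mathcal{R})(\Phi_\gamma(\W_\gamma))$ and $(\mathcal{R} L_{\Phi_\gamma(\V_\gamma)}\mathcal{R})(\Phi_\gamma(\W_\gamma))$ respectively; equality follows from the heredity of $\mathcal{R}$ in Example~\ref{ejemplo-mKdV}, and injectivity of $\Phi_\gamma$ closes the argument. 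The main obstacle I anticipate is the well-definedness step: one must check that the non-local operator $\mathcal{D}D_s^{-1}$ produces a $\tilde{\g}_\gamma$ whose associated tangential component $\tilde{\f}_\gamma = D_s^{-1}(\k_\gamma\tilde{\g}_\gamma)$ really sits in $\mathcal{P}_0$, so that $\bar{\mathcal{R}}\V \in \mathfrak{X}_{\mathcal{A}}(\Lambda_0)$ and $\Phi_\gamma^{-1}$ is unambiguously applicable; once this is settled, the rest of the proof is a formal transport of algebraic identities through the injective Lie algebra homomorphism $\Phi$.
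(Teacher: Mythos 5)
Your proposal is correct and follows essentially the same route as the paper: transport the identities $L_{\V_{S_0}}\bar{\mathcal{R}}=0$ and $L_{\bar{\mathcal{R}}\V}\bar{\mathcal{R}}=\bar{\mathcal{R}}L_{\V}\bar{\mathcal{R}}$ through the (injective) Lie algebra homomorphism $\Phi$ of Proposition \ref{main-theorem}, and identify $\Phi_\gamma(\V_{S_0,\gamma})=\partial_{\k'''_\gamma+\frac{3}{2}\k_\gamma^2\k'_\gamma}$ so that everything reduces to the known hereditary recursion operator $\mathcal{R}$ for mKdV of Example \ref{ejemplo-mKdV}. Your explicit well-definedness check that $\bar{\mathcal{R}}\V$ lies in $\mathfrak{X}_{\mathcal{A}}(\Lambda_0)$ is a welcome addition that the paper only handles implicitly through its explicit formula for $\bar{\mathcal{R}}\V$.
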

\begin{proof}
	The proof is a direct consequence of Proposition \ref{main-theorem}. In particular, it is the result of applying the following formulas:
	\begin{equation}
		\begin{aligned}
			(L_{\V}\bar{\mathcal{R}})(\W)(\gamma)&=\Phi^{-1}_\gamma \left((L_{\Phi_\gamma\V_\gamma}\mathcal{R})(\Phi_\gamma\W_\gamma)\right) \\
			(\bar{\mathcal{R}}\circ L_{\V}\bar{\mathcal{R}})(\W)(\gamma)&=\Phi^{-1}_\gamma \left((\mathcal{R}\circ L_{\Phi_\gamma\V_\gamma}\mathcal{R})(\Phi_\gamma\W_\gamma)\right)
		\end{aligned}
	\end{equation}
	where $L$ denotes the Lie derivative on the corresponding spaces. It follows that $\bar{\mathcal{R}}$ is a hereditary recursion operator for $\V_{S_0}$ if and only if $\mathcal{R}$ is a hereditary recursion operator for $\Phi_\gamma(\V_{S_0,\gamma})$. Nevertheless, the vector field
	$$\Phi_\gamma(\V_{S_0,\gamma})=\partial_{\V_{S_0,\gamma}(\k_\gamma)}=\partial_{\varphi'_{\V_{S_0,\gamma}}}=\partial_{\k'''_{\gamma}+\frac{3}{2}\k^2_\gamma\k'_\gamma}$$
	is the Hamiltonian vector field of the mKdV equation, proving this the desired result.
\end{proof}


We can calculate explicitly the recursion operator $\bar{\mathcal{R}}$ acting on a vector field $\V=\f\T+\g\N$ as
\begin{equation}
	\begin{aligned}
		(\bar{\mathcal{R}}\V)(\gamma)&=(\Phi^{-1}_{\gamma} \mathcal{R} \Phi_{\gamma})(\V_\gamma)=\Phi^{-1}_\gamma \mathcal{R} (\partial_{\mathcal{D}D_s^{-1}(\g_\gamma)})=\Phi^{-1}_{\gamma}(\partial_{(\mathcal{D}D_s^{-1})^2(\g_\gamma)}) \\
		&=D^{-1}_s(k_\gamma \mathcal{D}D_s^{-1}(\g_\gamma))\T+\mathcal{D}D_s^{-1}(\g_\gamma)\N.
	\end{aligned}
\end{equation}
Observe that it coincides with the recursion operator for planar filament equation given in \cite{langer_planar_1996}. The three first terms of the sequence are:
\begin{equation*}
	\begin{aligned}
		\V_0&=\frac{1}{2}\k^2\T+\k' \N, \\
		\V_1&=\bar{\mathcal{R}}(\V_0)= \left(-\k\k''+\frac{1}{2}\left(\k'\right)^2-\frac{3}{8}\k^4\right)\T+\left(-\k^{(3)}-\frac{3}{2} \k^2 \k'\right)\N, \\
		\V_2&=\bar{\mathcal{R}}(\V_1)=\left(\k^{(4)}\k+\frac{1}{2}\left(\k''\right)^2+\frac{5}{16}\k^6+\k^{(3)} \k'+\frac{5}{2} \k^3 \k''+\frac{5}{4} \k^2 \left(\k'\right)^2\right)\T \\
		&\qquad\qquad+\frac{1}{8} \left(8 \k^{(5)}+20 \left(\k'\right)^3+20 \k^{(3)}\k^2+80 \k \k' \k''+15 \k^4 \k'\right)\N.
	\end{aligned}
\end{equation*}
It is thus a well-known fact that recursion operators are integro-differential operators such that acting on a symmetry produces another symmetry. In fact, if we have a Hamiltonian equation with a hereditary recursion operator, then it generates a infinite hierarchy of independent vector fields commuting each other, i.e. 
$$L_{\V_n}\bar{\mathcal{R}}=0\quad\text{and}\quad [\V_n,\V_m]=0$$
for all $n,m\in \mathbb{N}$. In this sense we refer to this system as an integrable one.

\section{Conclusions}
We have given a Hamiltonian structure for plane curves motions in the context of Gelfand and Dickey approach to integrability. The key fact in order to give the Hamiltonian structure has been to define a Lie algebra structure on the whole set of local (polinomial in curvature and its derivatives) arc-length preserving variation vector fields which includes the commuting flows as a subalgebra. As far as we know, this is not done elsewhere. In addition, we show that this set can be identified with a Lie subalgebra of derivation vector fields. This identification acts as a bridge between the phase space of mKdV equation and the phase space of planar filament equation. In particular, by means of this connection, the recursion operator for PF equation is deduced from the recursion operator of the mKdV, thus showing the well-known fact that PF equation is an integrable system.  

This way of approaching is different from what has been presented in the literature so far. For simplicity, it has been studied for $2$-dimensional backgrounds for achieving an appropiate understanding, but everything points to this framework can be adapted to space of curves in $\mathbb{R}^n$ and other different ambient spaces, that will be the subject of future research.

\section*{Acknowledgements}
This work has been partially supported by MINECO (Ministerio de Economía y Competitividad) and FEDER (Fondo Europeo de Desarrollo Regional) Project MTM2012-34037.

\section*{References}
\bibliographystyle{elsart-num}
\bibliography{bibliography}

\end{document}